\newtheorem{remark}{Remark}
\newtheorem{proposition}{Proposition}
\newtheorem{theorem}{Theorem}
\newtheorem{lemma}{Lemma}
\newcommand{\tr}{^{\mathsf{T}}}
\newcommand{\e}{\mathrm{e}}
\newcommand{\ATone}{\texttt{AT$_1$}}
\newcommand{\ATtwo}{\texttt{AT$_2$}}
\title{A DG/CR discretization for the variational phase-field approach to fracture}
\date{February 11, 2023}
\author{\begin{minipage}{\textwidth}\centering Fr\'ed\'eric Marazzato$^{1}$ and Blaise Bourdin$^{2}$\\
   \small{$^{1}$Department of Mathematics, Louisiana State University, Baton Rouge, LA 70803, USA}\\
   \small{$^{2}$Department of Mathematics \& Statistics, McMaster University, 1280 Main Street West, Hamilton, ON L8S-4K1, Canada}\\
   \small{email: \texttt{marazzato@lsu.edu ; bourdin@mcmaster.ca}}\end{minipage}}
\begin{document}
\maketitle

\abstract{
   Variational phase-field models of fracture are widely used to simulate nucleation and propagation of cracks in brittle materials.
   They are based on the approximation of the solutions of free-discontinuity fracture energy by two smooth function: a displacement and a damage field.
   Their numerical implementation is typically based on the discretization of both fields by nodal $\mathbb{P}^1$ Lagrange finite elements.
   In this article, we propose a nonconforming approximation by discontinuous elements for the displacement and nonconforming elements, whose gradient is more isotropic, for the damage.
   The handling of the nonconformity is derived from that of heterogeneous diffusion problems.
   We illustrate the robustness and versatility of the proposed method through series of examples.
} \newline

\textit{Keywords: Phase-field, Fracture, Discontinuous Galerkin}

\section{Introduction}
Variational phase-field models of fracture~\cite{Bourdin-1998,Bourdin-Francfort-EtAl-2000a,Bourdin-Francfort-EtAl-2008b,Bourdin-2007a}, are increasingly popular approaches to the numerical simulation of crack nucleation and propagation in brittle materials.
Their essence is the approximation of Francfort and Marigo's free discontinuity energy~\cite{francfort1998revisiting} by a functional depending on smooth variables: a continuous displacement field and a phase-field representing the cracks. 

The strengths of these methods are now well-established, particularly concerning their ability to handle complex crack paths, reproduce crack nucleation in multiple settings~\cite{tanne2018crack,Kumar-Bourdin-EtAl-2020a}, or extend to complex multi-physics problems~\cite{Maurini-Bourdin-EtAl-2012a,Bourdin-Marigo-EtAl-2014a,Brach-Tanne-EtAl-2019a,Chukwudozie-Bourdin-EtAl-2019a}.
As such, the vast majority of numerical implementations are based on functionals derived from that introduced in~\cite{Ambrosio-Tortorelli-1992} for the Mumford-Shah image segmentation problem (see also~\cite{Braides-1998a}), regularized using Lagrange finite elements~\cite{Bellettini-Coscia-1994,Bourdin-1999,Bourdin-Francfort-EtAl-2000a}.

Their main strength, the representation of the crack geometry in terms of a smooth field allowing to easily handle complex geometries can become a liability in some situations.
In the context of hydraulic, for example, fracturing accounting for crack aperture in the fluid flow simulations in the fracture system is particularly challenging~\cite{Yoshioka_2020}.
This issue has been typically tackled in an ad-hoc manner~\cite{Miehe-Mauthe-EtAl-2015a,Mikelic-Wheeler-EtAl-2015a,Wilson-Landis-2016a}, or through complex geometric reconstructions~\cite{Ziaei-Rad-Shen-EtAl-2016x}.
A rigorous treatment, based on non-local averages of the phase-field along lines intersecting fractures is available but computationally costly~\cite{Chukwudozie-Bourdin-EtAl-2019a}.
 
Combining the strength of phase-field models with an explicit representation of the crack geometry, using discontinuous finite elements, for instance could have significant advantages for complex coupled problems, but this idea has been left relatively unexplored, with the exception of~\cite{engwer2017phase} where a symmetric discontinuous Galerkin discretization for the displacement was introduced, while the phase-field variable remained discretized using nodal Lagrange elements and~\cite{muixi2020hybridizable} where a fully hybridized discontinuous Galerkin discretization is chosen for both displacement and damage.

In this article, we propose a novel approach using a non-symmetric discontinuous Galerkin discretization for the displacement and a Crouzeix--Raviart discretization for the damage.
In this approach, $\mathbb{P}^1$ discontinuous elements are used to approximate the displacements whose discontinuities are then localized along element faces or edges, so that the cracks, their orientation or normal aperture can easily be obtained.
The rationale for using $\mathbb{P}^1$ Crouzeix--Raviart finite elements to approximate the damage is that the gradient of Crouzeix--Raviart functions is more isotropic that classical $\mathbb{P}^1$ nodal finite elements as shown in~\cite{chambolle2020crouzeix}.
The choice of a non-symmetric version over a symmetric one is justified by its stability with respect to the value of the penalty parameter.
The robustness of the method regarding crack propagation and crack nucleation is asserted on numerical tests.
Because this discretization scheme allows displacement jumps along element faces, instead of along a long strip of elements, the proposed scheme leads to a better approximation of the surface energy, and reduces the needs to adjust for ``effective toughness'' as discussed in~\cite{Bourdin-Francfort-EtAl-2008b} (Section 8.1.1).

In Section~\ref{sec:continuous model}, we present the continuous model adopted to model crack propagation under quasi-static loading and then derive the evolution equations for the displacement and the damage. In Section~\ref{sec:discrete setting}, the discretization is introduced and proved to have discrete solutions.
The convergence towards the continuous model is proved in Appendix \ref{sec:proof}. In Section~\ref{sec:numerical tests}, numerical tests show the reliability of the method in the computation of crack propagation and crack nucleation in both one and two dimensions. Finally, a comparison with experimental results obtained in~\cite{pham2017experimental} is performed.
In Section~\ref{sec:conclusion}, some conclusions are drawn and the potential for future work is explored.

\section{Phase-field models of quasi-static brittle fracture}
\label{sec:continuous model}
Consider a brittle material occupying a bounded domain $\Omega$, a polyhedron in $\mathbb{R}^d$ ($d=2,3$) which can be perfectly fitted by simplicial meshes, with Hooke's law $\mathbb{C} = \lambda \mathbf{1} \otimes \mathbf{1} + 2\mu \mathbf{1}_4$, where $\mathbf{1}$ is the unit tensor of order 2,  $\mathbf{1}_4$ the unit tensor of order four, and $\lambda$ and $\mu$ are the Lamé coefficients. 
For any $x\in \Omega$, the displacement is written $u(x) \in \mathbb{R}^d$, the linearized strain is  $\e(u) = \frac12 \left(\nabla u + \nabla u^{\tr} \right) \in \mathbb{R}^{d \times d}_{\mathrm{sym}}$, and the stress is $\sigma(u) = \mathbb{C}\e(u) \in \mathbb{R}^{d\times d}_{\mathrm{sym}}$.
Let $\partial \Omega = \partial \Omega_D \cup \partial \Omega_N$ be a partition of the boundary of $\Omega$. We assume that  $\partial \Omega_D$ is  a closed set and $\partial \Omega_N$  is open in $\partial \Omega$.

We consider a set of discrete time steps $0 = t_0 < \dots < t_N = T$ and at each step $t_i$, a Dirichlet boundary condition $u_i=w_i$, with $w_i \in \left(H^{1/2}(\partial \Omega) \right)^d$, is imposed on $\partial \Omega_D$ while the remaining part of the boundary $\partial \Omega_N$ remain stress free.
Let $\ell >0$ be a small regularization length, $\alpha \in A := \left\{ \alpha \in H^1(\Omega;[0,1]) ;\ \alpha=0 \text{ on } \partial \Omega_D\right\}$ the phase-field variable representing cracks, and 
\[
   U_i := \left\{ u \in \left(H^1(\Omega)\right)^d ; \ u=w_i \text{ on } \partial \Omega_D\right\}
\]
be the space of admissible displacements.
To any $(u,\alpha)\in V_i := U_i\times A$, we associate the generalized phase-field energy~\cite{Braides-1998a,Bourdin-Francfort-EtAl-2000a,Bourdin-Francfort-EtAl-2008b}
\begin{align}
   \label{eq:energy continuous Phase-field}
   \mathcal{E}_\ell(u,\alpha) := & \frac12 \int_{\Omega} \left(a(\alpha)+\eta_\ell\right) \mathbb{C}\e(u)  \cdot \e( u) \, dx + \frac{G_c}{c_w} \int_{\Omega} \left( \frac{w(\alpha)}{\ell} + \ell |\nabla \alpha|^2  \right)\, dx,\\
   & = \mathcal{E}^{(e)}_\ell(u,\alpha) + \mathcal{E}^{(s)}_\ell(u,\alpha ) \notag
\end{align}
where $a$ and $w$ are two non-negative $\mathcal{C}^1$ monotonic functions such that $a(0)=w(1) = 1$ and $a(1) = w(0) = 0$, $\eta_\ell = o(\ell)$ is a regularization parameter,  $c_w:= 4\int_0^1\sqrt{w(s)}\, ds$ is a normalization parameter, and $G_c$ is the  critical elastic energy release rate, a material property.
   The first term in~\eqref{eq:energy continuous Phase-field} approximates the elastic energy of the system while the second one represents the fracture energy, \emph{i.e.} in the limit of $\ell \to 0$ the aggregate crack length in two space dimensions and area in three dimensions, scaled by $G_c$. 
For the \ATone{} model subsequently used in this article, $a(\alpha) = (\alpha - 1)^2$ and $w(\alpha) = \alpha$.
Another classic choice is the \ATtwo{} model for which $a(\alpha) = (\alpha - 1)^2$ and $w(\alpha) = \alpha^2$.

Following~\cite{Bourdin-Francfort-EtAl-2008b}, at each time step $t_i$ $(0<i\le N)$, we seek a pair $(u_i,\alpha_i)$ solution of
\begin{equation}
   \label{eq:min problem}
      (u_i,\alpha_i) = \mathop{\mathrm{argmin}} \limits_{\substack{(v,\beta) \in V_i \\ \alpha_{i-1}  \le \beta \le 1}} \mathcal{E}_\ell(v,\beta),
\end{equation}
with the convention that $\alpha_0 = 0$.
Note that a simple truncation argument shows that the upper bound $\alpha \le 1$ is naturally satisfied by the global minimizers of $\mathcal{E}_\ell$ and does not have to be explicitly enforced.
At each step $t_i$, $i>0$, the Euler-Lagrange first order necessary conditions for optimality consist in searching for $u_{i} \in U_i$ such that
\begin{subequations}
   \begin{equation}
      \label{eq:continuous balance disp}
      \int_{\Omega} (a(\alpha_i) + \eta_\ell) \mathbb{C}\e(u_{i}) \cdot \e(v) \, dx= 0
   \end{equation}
   for any $v \in \left(H^1(\Omega)\right)^d$ such that $v = 0$ on $\partial\Omega_D$ and for $\alpha_{i} \in A$, $\alpha_{i-1} \le \alpha_{i}$ a.e. such that 
\begin{multline}
\label{eq:continuous inequality}
 \int_{\Omega} \mathbb{C}\e(u_i) \cdot \e(u_i) \frac12a'(\alpha_i) (\beta - \alpha_{i})  \, dx \\
      + \frac{G_c}{c_w} \int_{\Omega}
      \left(2\ell \nabla \alpha_{i} \cdot \nabla (\beta - \alpha_{i}) + \frac{w'(\alpha_i)}{\ell}(\alpha_i - \beta) \right)\, dx \ge 0,
      \end{multline}
for any $\beta \in K_i:= \left\{\beta \in H^1(\Omega) ;\ \beta \ge \alpha_{i}\ a.e.,\ \beta = 0 \text{ on } \partial \Omega_D\right\}$.
\end{subequations}


It is now well-known that as $\ell \to 0$, the phase-field energy $\mathcal{E}_\ell$ $\Gamma$-converges to Francfort and Marigo's generalized Griffith energy 
\begin{equation}
   \mathcal{E}(u,\Gamma) := \frac12\int_{\Omega\setminus \Gamma} \mathbb{C}\e(u)  \cdot \e(u) \, dx + G_c \mathcal{H}^{d-1}(\Gamma\cap \bar{\Omega}),
   \label{eq:defFM}   
\end{equation}
so that the solutions of~\eqref{eq:min problem} converge to unilateral minimizers of $\mathcal{E}$ subject to a crack growth hypothesis, and that the set $\left\{\alpha(x) > 0 \right\}$ converges in some sense to the crack $\Gamma$ in~\eqref{eq:defFM} (see~\cite{Giacomini-2005,Bourdin-Francfort-EtAl-2008b,DalMaso-Iurlano-2013a}, for instance).


\section{Space discretization}
\label{sec:discrete setting}
For a given mesh $\mathcal{T}$, the diameter of the mesh is defined as $h := \max_{c \in \mathcal{T}} h_c$, where $h_c$ is the diameter of the cell $c$.
We consider a sequence of matching simplicial meshes $(\mathcal{T}_h)_h$ indexed by $h$ with $h \to 0$.
The mesh is supposed to be shape regular in the sense of~\cite{ciarlet2002finite}, \emph{i.e.} there exists a parameter $\rho > 0$, independent of $h$, such that, for all $c \in \mathcal{T}_h$, $\rho h_c \le r_c$, where $r_c$ is the radius of the largest ball inscribed in $c$.
Let $\mathbb{P}^1_d(\mathcal{T}_h)$ be the set of broken polynomials of order one and dimension $d$ on the mesh $\mathcal{T}_h$, and $\mathbb{P}^1(\mathcal{T}_h)$ a similar set for scalar polynomials, see \cite{di2011mathematical}.
The set of mesh facets of the mesh $\mathcal{T}_h$ is written $\mathcal{F}_h$ and is partitioned into $\mathcal{F}_h := \mathcal{F}^i_h \cup \mathcal{F}^b_h$, where $\mathcal{F}^i_h$ is the set on internal facets and $\mathcal{F}^b_h$ is the set of boundary facets, thus $\forall F \in \mathcal{F}^b_h, F \subset \partial \Omega$.
The broken gradient $\nabla_h$ is defined for a function $f$ as $\nabla_h f = \sum_{c \in \mathcal{T}_h} \nabla (f_{|c})$, where $f_{|c}$ is the restriction of a function $f$ to a cell $c$.
The discrete strain $\e_h$ is then defined as the symmetric part of $\nabla_h$.
For an inner facet $F \in \mathcal{F}^i_h$, we define $c_{F,+},c_{F,-} \in \mathcal{T}_h$ to be the cells sharing the facet $F$.
A normal vector to each inner facet $F$, written $n_F$, is directed from $c_{F,-}$ towards $c_{F,+}$.
The average and jump of a function $u_h \in \mathbb{P}^1_d(\mathcal{T}_h)$ over the inner facet $F \in \mathcal{F}^i_h$ are respectively defined as $\{u_h\}_F := \frac12(u_{c_{F,+}}+u_{c_{F,-}})$ and $[u_h]_F := u_{c_{F,+}} - u_{c_{F,-}}$, where $u_c$ is the restriction of $u_h$ to the cell $c \in \mathcal{T}_h$.

\subsection{Definition of discrete spaces}
The Dirichlet boundary conditions are imposed strongly both for the displacement and the damage.
We define $\pi_h$ to be the $L^2$ projection onto $\mathbb{P}^1_d(\mathcal{T}_h)$ or $\mathbb{P}^1(\mathcal{T}_h)$, \emph{i.e.} $\pi_h f_i$ is the solution of
\[ \int_\Omega (\pi_h f _i - f_i) \cdot v_h = 0, \quad \forall v_h \in \mathbb{P}^1_d(\mathcal{T}_h). \]
Since $w_i \in \left(H^{1/2}(\partial \Omega)\right)^d$, there exist $f_i \in \left(H^1(\Omega)\right)^d$ such that ${f_i}_{|\partial \Omega_D} = w_i$ on $\partial \Omega_D$.
Let $U_{i,h} := \{u_h \in \mathbb{P}^1_d(\mathcal{T}_h) | u_h = \pi_h f_i \text{ on } \partial \Omega_D \}$ and $A_h := \{ \alpha_h \in \mathbb{P}^1(\mathcal{T}_h) | \forall F \in \mathcal{F}^i_h, \int_F [\alpha_h]_F = 0 \text{ and } \alpha_h = 0 \text{ on } \partial \Omega_D \}$.

As a consequence, the displacement $u_h$ is cellwise $\mathbb{P}^1$ and discontinuous across facets and the damage $\alpha_h$, which is $\mathbb{P}^1$ Crouzeix--Raviart, is continuous only at the barycentre of the inner facets \cite{ern_guermond}.
Figure \ref{fig:dofs} sketches the location of the degrees of freedom (dofs) for the elements mentioned above.
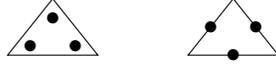
\begin{figure}[!htp]
\centering
\begin{tikzpicture}[scale=1.2]

\draw (0,0) -- (1,0) -- (0.5,0.625) -- cycle;
\node at (0.25,0.1) {$\bullet$};
\node at (0.75,0.1) {$\bullet$};
\node at (0.5,0.4) {$\bullet$};

\draw (2,0) -- (3,0) -- (2.5,0.625) -- cycle;
\node at (2.25,0.3) {$\bullet$};
\node at (2.5,0) {$\bullet$};
\node at (2.75,0.3) {$\bullet$};

\end{tikzpicture}
\caption{Dofs in a cell. Left: discontinuous Galerkin. Right: Crouzeix--Raviart.}
\label{fig:dofs}
\end{figure}
Thus the discrete space to look for a solution is $V_{i,h} := U_{i,h} \times A_h$ and the associated test space is $V_{0,h} := U_{0,h} \times A_{h}$ where functions in $U_{0,h}$ verify homogeneous Dirichlet boundary conditions on $\partial \Omega_D$.
We define the following discrete norms on these spaces to study the discrete problem.
The jump semi-norm is defined as
\begin{equation*}
   |u_h|^2_J := \sum_{F \in \mathcal{F}^i_h} \frac{1}{h_F} \Vert[u_h]_F\Vert^2_{L^2(F)}
\end{equation*}
The interior penalty norm is defined as
\[   \Vert u_h \Vert^2_{ip} := \sum_{c \in \mathcal{T}_h} \Vert \e(u_c)\Vert^2_{L^2(c)} + |u_h|^2_J. \]
   
\subsection{Naive discretization of displacements}
Let $(u_h,\alpha_h) \in V_{i,h}$ {and $v_h \in U_{0,h}$.}
Naively, one may want to discretize the bilinear form in~\eqref{eq:continuous balance disp} as
\begin{equation}
\label{eq:naive}
\int_{\Omega} (a(\alpha_h) + \eta_\ell) \mathbb{C}\e_h(u_h) \cdot \nabla_h v_h \, dx.
\end{equation}
However, unlike functions in $V_i$, the functions in $V_{i,h}$ can jump between cells so that the solution $u_i$ of~\eqref{eq:continuous balance disp} may not satisfy~\eqref{eq:naive}.
To make sure that it is the case, a consistency term has to be added to the term of~\eqref{eq:naive} to write a discrete formulation.
To build this consistency term, one uses an integration by parts on each cell and gets
\begin{multline*}
   \int_{\Omega} (a(\alpha_h) + \eta_\ell) \mathbb{C}\e_h(u_h) \cdot \nabla_h v_h \, dx 
   = -\sum_{c \in \mathcal{T}_h} \int_{c} \mathrm{div}\left((a(\alpha_h) + \eta_\ell) \mathbb{C}\e(u_h)\right) \cdot v_h \, dx \\
    + \int_{\partial c} \left((a(\alpha_h) + \eta_\ell) \mathbb{C}\e(u_h) v_h \right) \cdot n\, dS,      
\end{multline*}
so that 
\begin{multline}
\label{eq:ibp}
\int_{\Omega} (a(\alpha_h) + \eta_\ell) \mathbb{C}\e_h(u_h) \cdot \nabla_h v_h \, dx = 
 -\sum_{c \in \mathcal{T}_h} \int_{c} \mathrm{div}\left((a(\alpha_h) + \eta_\ell) \mathbb{C}\e(u_h)\right) v_h \, dx \\
 + \sum_{F \in \mathcal{F}_h^i} \int_F \left[(a(\alpha_h) + \eta_\ell) \mathbb{C}\e(u_h) v_h \right]_F \cdot n_F\, dS.
\end{multline}
Testing for consistency by replacing $u_h$ with $u$, on any inner facet $F \in \mathcal{F}_h^i$, the last term in the right-hand side of~\eqref{eq:ibp} becomes 
\begin{multline}
\label{eq:pb} 
   \int_F \left[(a(\alpha_h) + \eta_\ell) \mathbb{C}\e(u)  v_h \right]_F \cdot n_F\, dS = \int_F \bigl( \left[(a(\alpha_h) + \eta_\ell) \mathbb{C}\e(u) \right]_F  \{v_h\}_F\,  \\
   +\left\{(a(\alpha_h) + \eta_\ell) \mathbb{C}\e(u) \right\}_F \cdot [v_h]_F \bigr)\cdot n_F\, dS.
\end{multline}
The last term in the right-hand side of~\eqref{eq:pb} is usual in a discontinuous Galerkin framework, see \cite{arnold1982interior}, for instance.
However, the first term in the right-hand side does not vanish since the Crouzeix--Raviart finite element is not globally continuous on an inner facet $F \in \mathcal{F}_h^i$ and $a$ is not an affine function.
Indeed, for an inner facet $F \in \mathcal{F}_h^i$, one has $\int_F [\alpha_h]_F dS = 0$ and thus one would have $\int_F [f(\alpha_h)]_F dS = 0$ for an affine function $f$.

\subsection{Discretization of displacement evolution}
Instead, we acknowledge the existence of jumps of $a(\alpha_h)$ and will instead consider the material studied as heterogeneous in the sense that $\mathbb{C}(\alpha) := (a(\alpha) + \eta_\ell) \mathbb{C}$.
A method to handle heterogeneous materials was introduced in~\cite{dryja2003discontinuous} and analysed in~\cite{di2008discontinuous}.
However, it requires to have cellwise constant material parameters.
Thus, the $a(\alpha_h)$ in~\eqref{eq:naive} is projected onto the set $\mathbb{P}^0(\mathcal{T}_h)$ of cellwise constant functions.
We define $\Pi_h$ as the $L^2$-orthogonal projection from $L^2(\Omega)$ onto $\mathbb{P}^0(\mathcal{T}_h)$ and thus for $\phi \in L^2(\Omega)$, one has
\begin{equation*}
\Pi_h \phi := \sum_{c \in \mathcal{T}_h} { \left( \frac1{|c|} \int_c \phi dx \right) \chi_c,}
\end{equation*}
{where $\chi_c$ is the indicator function of the cell $c \in \mathcal{T}_h$.}
We thus define $a_h(\alpha_h) := \Pi_h a(\alpha_h)$, where $a(\alpha_h)\in L^2(\Omega)$.
Note that $a_h(\alpha_h) {\ge 0}$, by definition of $a$.
Following~\cite{di2008discontinuous}, we define the weighted average over an inner facet $F \in \mathcal{F}^i_h$ as, for $u_h \in U_{i,h}$,
\begin{equation*}
   \{\sigma_h(u_h) \cdot n_F\}_{w,F} :=  \frac{\left\langle \left(a_h(\alpha_h) +\eta_\ell\right) \sigma_h(u_h) \cdot n_F \right\rangle_F}{\left<a_h(\alpha_h) + \eta_\ell\right>_F },
\end{equation*}
where $\left<\alpha_h\right>_F := \frac12 \left(\alpha_{c_{F,-}} + \alpha_{c_{F,+}} \right)$ is the average value of $\alpha_h$ over an inner facet $F \in \mathcal{F}^i_h$.
The corresponding consistency term is 
\begin{equation*}
-\sum_{F \in \mathcal{F}_h^i} \int_F \{(a_h(\alpha_h)+\eta_\ell) \sigma_h(u_h) \cdot n_F \}_{w,F} \cdot [v_h]_F dS,
\end{equation*}
where $u_h \in U_{i,h}$ and $v_h \in U_{0,h}$.
It is used to have the strong consistency of the discrete bilinear form $\mathcal{U}_h$ defined below in Equation~\eqref{eq:discrete bilinear form} in the sense that $\mathcal{U}_h(\alpha_h;u,v_h) = 0$, for all $v_h \in U_{0,h}$.
Following~\cite{dryja2003discontinuous}, we define the penalty term as
\begin{equation*}
\sum_{F \in \mathcal{F}_h^i} \frac{\zeta}{h_F} \gamma_F \int_F [u_h]_F \cdot [v_h]_F dS,
\end{equation*}
where $\zeta > 0$ is a penalty parameter and 
we define for an inner facet $F \in \mathcal{F}^i_h$,
\begin{equation}
\label{eq:weighted pen}
\gamma_F := \frac{(a_h(\alpha_h)_{c_{F,+}}+\eta_\ell)( a_h(\alpha_h)_{c_{F,-}}+\eta_\ell)}{\left<a_h(\alpha_h) + \eta_\ell \right>_F} > 0,
\end{equation}
where $\gamma_F$ is twice the harmonic average of $a_h(\alpha_h) +\eta_\ell$ over $F$.

Because the penalty coefficient~\eqref{eq:weighted pen} can become locally very small when $\alpha_F \to 1$, we resort to a non-symmetric discontinuous Galerkin formulation~\cite{riviere2001priori,riviere2003discontinuous}, which has the main advantage of being stable even with small penalty terms, as proved in Proposition~\ref{th:discrete solution disp} below.
Let us then write the discretization of the bilinear form in~\eqref{eq:continuous balance disp}, for $u_h \in U_{i,h}$ and $v_h \in U_{0,h}$:
\begin{multline}
   \label{eq:discrete bilinear form}
       \mathcal{U}_h(\alpha_h;u_h,v_h) := \int_{\Omega} (a_h(\alpha_h)+\eta_\ell) \mathbb{C}\e_h(u_h) \cdot \e_h(v_h)dx \\
       - \sum_{F \in \mathcal{F}_h^i} \int_F \bigl(\{(a_h(\alpha_h)+\eta_\ell) \sigma_h(u_h) \cdot n_F \}_{w,F} \cdot [v_h]_F \\
       - \{(a_h(\alpha_h)+\eta_\ell) \sigma_h(v_h) \cdot n_F \}_{w,F} \cdot [u_h]_F \bigr) dS \\
       + \sum_{F \in \mathcal{F}_h^i} \frac{\zeta}{h_F} \gamma_F \int_F [u_h]_F \cdot [v_h]_F dS,
\end{multline}
As a consequence, the discretized Euler--Lagrange equation consists in searching for $u_h \in U_{i,h}$ such that
\begin{equation}
   \label{eq:discrete eq u 2}
   \mathcal{U}_h(\alpha_h;u_h,v_h) = 0, \quad \forall v_h \in U_{0,h}.
\end{equation}
Equation~\eqref{eq:discrete eq u 2} admits a unique solution as proved in Proposition \ref{th:discrete solution disp}.

\subsection{Discretization of damage evolution}
The second Euler--Lagrange equation is actually a variational inequality due to the irreversibility constraint $\alpha_{i-1,h} \leq \alpha_h$, where $\alpha_{i-1,h}$ is the value of the damage variable at the previous time step $t_{i-1}$.
The associated bilinear form for any $\beta_h \in A_h$ is
\begin{equation*}
   \mathcal{A}_{h}(u_h; \alpha_h, \beta_h) = \int_{\Omega} \mathbb{C}\e_h(u_h) \cdot \e_h(u_h) \alpha_h \beta_h dx  \\
    + \frac{2G_c}{c_w} \int_{\Omega} \ell \nabla_h \alpha_h \cdot \nabla_h \beta_h dx,
\end{equation*}
and the linear form is 
\[ f(u_h; \beta_h) := \int_\Omega \mathbb{C}\e_h(u_h) \cdot \e_h(u_h) \beta_h dx + \frac{G_c}{c_w} \int_{\Omega} \frac{\beta_h}{\ell} dx. \]
Thus, we introduce the cone $K_{i,h}:=\{\beta_h \in A_{h} | \alpha_{i-1,h} \le \beta_h\}$ and the Euler--Lagrange equation becomes: search for $\alpha_h \in K_{i,h}$,
\begin{equation}
   \label{eq:discrete inequality}
   \mathcal{A}_{h}(u_h; \alpha_h, \beta_h - \alpha_{i,h}) \ge f(u_h; \beta_h - \alpha_{i,h}), \quad \forall \beta_h \in K_{i,h}.
\end{equation}
Equation~\eqref{eq:discrete inequality} admits a unique solution as proved in Proposition~\ref{th:damage ineq}.

\subsection{Well posedness and convergence of the scheme}
The regularization length $\ell$ is kept constant in this proof.
We only explore the effect of having the mesh size $h \to 0$.

\begin{proposition}~\eqref{eq:discrete eq u 2} admits a unique solution.
\label{th:discrete solution disp}
\end{proposition}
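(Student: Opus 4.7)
The plan is to cast \eqref{eq:discrete eq u 2} as a square linear system on the finite-dimensional space $U_{0,h}$ and prove well-posedness via coercivity. First I would fix any discrete lift $u_h^D \in \mathbb{P}^1_d(\mathcal{T}_h)$ with $u_h^D = \pi_h f_i$ on $\partial \Omega_D$, so that $u_h = \tilde u_h + u_h^D$ with $\tilde u_h \in U_{0,h}$, and the problem reduces to finding $\tilde u_h \in U_{0,h}$ such that $\mathcal{U}_h(\alpha_h; \tilde u_h, v_h) = -\mathcal{U}_h(\alpha_h; u_h^D, v_h)$ for every $v_h \in U_{0,h}$. Since both spaces are finite-dimensional and of equal dimension, existence and uniqueness will follow from injectivity alone.

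The crucial observation is that the non-symmetric choice of the consistency terms in \eqref{eq:discrete bilinear form} makes the two facet integrals cancel on the diagonal. Setting $v_h = u_h$, I get
\[
\mathcal{U}_h(\alpha_h; u_h, u_h) = \int_\Omega (a_h(\alpha_h)+\eta_\ell) \mathbb{C}\e_h(u_h) \cdot \e_h(u_h)\, dx + \sum_{F \in \mathcal{F}_h^i} \frac{\zeta}{h_F} \gamma_F \Vert [u_h]_F \Vert^2_{L^2(F)}.
\]
Since $a(\alpha_h) \ge 0$ pointwise and $\Pi_h$ preserves non-negativity, $a_h(\alpha_h) + \eta_\ell \ge \eta_\ell > 0$ on every cell. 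A short direct computation on the harmonic mean in \eqref{eq:weighted pen} yields $\gamma_F \ge \min(a_h(\alpha_h)_{c_{F,+}}+\eta_\ell,\,a_h(\alpha_h)_{c_{F,-}}+\eta_\ell) \ge \eta_\ell$ on every inner facet. Combined with the pointwise positive-definiteness of the Hooke tensor $\mathbb{C}$, this gives the coercivity estimate
\[
\mathcal{U}_h(\alpha_h; u_h, u_h) \ge C(\mu,\eta_\ell,\zeta)\, \Vert u_h \Vert^2_{ip}
\qquad \forall u_h \in U_{0,h}.
\]

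It then remains to verify that $\Vert \cdot \Vert_{ip}$ is indeed a norm on $U_{0,h}$. If $\Vert u_h \Vert_{ip} = 0$, then $\e(u_c) = 0$ on every cell so that $u_{h|c}$ is an infinitesimal rigid motion, while vanishing jumps $[u_h]_F = 0$ in $L^2(F)$ force these rigid motions to agree on each inner facet, hence to glue into a single global rigid motion of $\mathbb{R}^d$. The homogeneous Dirichlet condition on $\partial \Omega_D$ (assumed of positive surface measure) then pins this rigid motion to $u_h = 0$. Coercivity on the finite-dimensional space $U_{0,h}$ implies injectivity, hence bijectivity of the associated linear operator, which gives existence and uniqueness of $\tilde u_h$ and thus of $u_h$.

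No substantial obstacle is anticipated: the non-symmetric formulation was chosen precisely so that coercivity is unconditional in $\zeta$ and does not require a discrete trace inequality to absorb consistency terms, as would be needed in the symmetric variant. The only mildly delicate point is the norm property of $\Vert \cdot \Vert_{ip}$, which rests on the standing assumption that $\partial \Omega_D$ has positive $(d-1)$-dimensional measure.
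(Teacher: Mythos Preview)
Your proposal is correct and follows essentially the same route as the paper: reduce to injectivity on the finite-dimensional space $U_{0,h}$, exploit the skew-symmetry of the consistency terms so that $\mathcal{U}_h(\alpha_h;u_h,u_h)$ contains only the volume and penalty contributions, bound these below using $a_h(\alpha_h)+\eta_\ell\ge\eta_\ell$ and a lower bound on $\gamma_F$, and conclude via the rigid-motion argument under the homogeneous Dirichlet condition. Your harmonic-mean bound $\gamma_F\ge\min_{\pm}(a_h(\alpha_h)_{c_{F,\pm}}+\eta_\ell)\ge\eta_\ell$ is in fact slightly sharper than the paper's $\gamma_F\ge\eta_\ell^2/(1+\eta_\ell)$, but otherwise the arguments coincide.
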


\begin{proof}
As $U_{i,h}$ is a finite-dimensional space, uniqueness and existence of a solution $u_{i,h}$ to~\eqref{eq:discrete eq u 2} are equivalent.
Let us prove uniqueness by considering $u_h \in U_{0,h}$ solution of~\eqref{eq:discrete eq u 2}.
We want to prove that $u_h = 0$ in $\Omega$.
One has 
\[ \mathcal{U}_h(\alpha_h;u_h,u_h) = 0, \]
and thus 
\[ \eta_\ell 2\mu \Vert \e_h(u_h) \Vert_{L^2(\Omega)}^2 + \frac{\zeta \eta_\ell^2}{1 + \eta_\ell}|u_h|^2_J \le 0. \]
Therefore, $[u_h]_F = 0$, for all $F \in \mathcal{F}^i_h$, and $u_h$ is continuous in $\Omega$.
We deduce that $0 = \e_h(u_h) = \e(u_h)$ in $\Omega$.
Thus, $u_h$ is a constant rigid body motion in $\Omega$.
However, taking into account the homogeneous Dirichlet boundary conditions in $U_{0,h}$, $u_h=0$ in $\Omega$.
\end{proof}

\begin{remark}[Non-variational discretization]
Because the bilinear form that is the second term in the left-hand side of~\eqref{eq:discrete bilinear form} is skew-symmetric, its curl in the second variable ($u_h$) is not zero.
As a consequence, this term cannot be the gradient of a potential energy and thus a discrete equivalent to~\eqref{eq:energy continuous Phase-field} cannot be provided.
\end{remark}

\begin{proposition}
\label{th:damage ineq}
\eqref{eq:discrete inequality} admits a unique solution.
\end{proposition}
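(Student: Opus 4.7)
The plan is to apply the Stampacchia theorem for coercive elliptic variational inequalities on a nonempty closed convex subset of a Hilbert space. Since $A_h$ is finite-dimensional, continuity of the bilinear and linear forms is automatic, and the only nontrivial issues are coercivity of $\mathcal{A}_h(u_h;\cdot,\cdot)$ and the geometric properties of $K_{i,h}$.

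First I would equip $A_h$ with the norm $\|\beta_h\|_{A_h}^2 := \|\nabla_h \beta_h\|_{L^2(\Omega)}^2$. That this defines a genuine norm (and not merely a seminorm) on $A_h$ follows from a discrete Poincaré inequality for Crouzeix--Raviart functions vanishing in the facet-mean sense on $\partial\Omega_D$, which by assumption has positive $(d-1)$-dimensional Hausdorff measure; this is classical, see for instance~\cite{ern_guermond}. The set $K_{i,h}$ is nonempty (it contains $\alpha_{i-1,h}$, with $\alpha_{0,h}=0$ at the initial step by induction on $i$), convex as the intersection of $A_h$ with a pointwise affine inequality constraint, and closed in the finite-dimensional space $A_h$. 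Coercivity of $\mathcal{A}_h(u_h;\cdot,\cdot)$ then follows from
\[
\mathcal{A}_h(u_h;\beta_h,\beta_h) = \int_\Omega \mathbb{C}\e_h(u_h)\cdot\e_h(u_h)\,\beta_h^2\,dx + \frac{2G_c\ell}{c_w}\|\nabla_h \beta_h\|_{L^2(\Omega)}^2 \ge \frac{2G_c\ell}{c_w}\|\beta_h\|_{A_h}^2,
\]
since the damage-weighted elastic contribution is pointwise non-negative. Continuity of $\mathcal{A}_h$ and of the linear form $\beta_h \mapsto f(u_h;\beta_h)$ is immediate in finite dimensions.

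Stampacchia's theorem then delivers existence and uniqueness of the solution. Equivalently, since $\mathcal{A}_h(u_h;\cdot,\cdot)$ is symmetric, \eqref{eq:discrete inequality} is the first-order optimality condition for the minimization, over the closed convex set $K_{i,h}$, of the strictly convex coercive quadratic
\[
J_{u_h}(\beta_h) := \tfrac12 \mathcal{A}_h(u_h;\beta_h,\beta_h) - f(u_h;\beta_h),
\]
and existence and uniqueness of a minimizer follow from the direct method of the calculus of variations. The only conceptually nontrivial step is the verification of coercivity through the discrete Poincaré inequality for Crouzeix--Raviart elements subject to mean-Dirichlet conditions; once this classical fact is admitted, the rest is a textbook application of variational-inequality theory.
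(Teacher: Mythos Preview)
Your proof is correct and follows essentially the same route as the paper. The paper phrases the argument directly through the associated constrained minimization problem---noting that the discrete energy in $\alpha_h$ is continuous, proper, and strictly convex on the finite-dimensional space $A_h$, then handling the irreversibility constraint via a proper convex lower semicontinuous indicator function---whereas you invoke Stampacchia's theorem (and then remark on the equivalent minimization formulation yourself); these are two standard dressings of the same idea, and your version is, if anything, slightly more explicit about the coercivity mechanism and the role of the discrete Poincar\'e inequality.
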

\begin{proof}
\eqref{eq:discrete inequality} is the variational inequality related to the following constrained minimization problem:
   \[ \min_{\alpha_{i-1,h} \le \alpha_h \le 1} \mathcal{E}_{\ell,h}(u_h,\alpha_h). \]
   $A_h$ is finite dimensional thus the existence of a solution to the unconstrained minimization problem is ensured by the fact that $\mathcal{E}_{\ell,h}$ is continuous in $\alpha_h$ and proper ($\mathcal{E}_{\ell,h}(u_h,\alpha_h) \to +\infty$ when $|\alpha_h| \to +\infty$). The solution is unique because $\mathcal{E}_{\ell,h}$ is strictly convex in $\alpha_h$.
   The irreversibility constraint is taken into account by adding to $\mathcal{E}_{\ell,h}$ the following indicator function which is proper convex lower semi-continuous
   \[\mathbf{X}_{[0,\alpha_{i-1,h}]} = \left\{
      \begin{alignedat}{3}
         &0& &\text{ if } \alpha_{i-1,h} \leq \alpha_h\\
         &+\infty& &\text{ otherwise}
      \end{alignedat}
      \right.
   \]
   Thus $\mathcal{E}_{\ell,h}(u_h,\cdot) + \mathbf{X}_{[0,\chi_h]}$ verifies the necessary hypotheses to ensure the existence and uniqueness of a solution to~\eqref{eq:discrete inequality}.
\end{proof}
The following theorem proves the convergence of the scheme when $h \to 0$.
\begin{theorem} [Convergence]
\label{th:convergence sym}
There exists a solution $(u_h, \alpha_h) \in V_{i,h}$ of~\eqref{eq:discrete eq u 2} and~\eqref{eq:discrete inequality}.
Furthermore, the sequence $(u_h, \alpha_h)_h$ converges when $h \to 0$ towards $(u_i,\alpha_i) \in U_i \times K_i$ which is a solution of~\eqref{eq:continuous balance disp} and~\eqref{eq:continuous inequality}.
\end{theorem}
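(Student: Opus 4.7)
The overall plan has four parts: build a coupled discrete solution via a fixed-point argument, derive $h$-independent bounds, extract a subsequence using DG and Crouzeix--Raviart compactness, and pass to the limit in~\eqref{eq:discrete eq u 2} and~\eqref{eq:discrete inequality}. For existence, since $\mathcal{U}_h$ is non-symmetric the coupled problem has no underlying discrete energy (see the preceding Remark), so I would not minimize directly. Instead I would apply Brouwer's fixed-point theorem to the map $T:=T_2\circ T_1$ where, given $\alpha_h$, $T_1(\alpha_h):=u_h$ is the unique solution of~\eqref{eq:discrete eq u 2} (Proposition~\ref{th:discrete solution disp}) and, given $u_h$, $T_2(u_h):=\alpha_h'$ is the unique solution of~\eqref{eq:discrete inequality} (Proposition~\ref{th:damage ineq}). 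The map $T$ stabilizes the compact convex set $\{\alpha_h\in A_h\,:\,\alpha_{i-1,h}\le\alpha_h\le 1\}$; continuity of $T_1$ (linear system with $\alpha_h$-dependent, uniformly coercive coefficients since $\eta_\ell>0$) and of $T_2$ (strictly convex constrained minimization) then yields a fixed point.

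For the uniform bounds, testing~\eqref{eq:discrete eq u 2} with $u_h$ minus a discrete lift of the Dirichlet data, together with the coercivity observed in the proof of Proposition~\ref{th:discrete solution disp} and a broken Korn inequality on $U_{0,h}$, gives $\|u_h\|_{ip}\le C$ independently of $h$. Choosing an admissible $\beta_h$ in~\eqref{eq:discrete inequality} and exploiting $0\le\alpha_h\le 1$ bounds $\|\nabla_h\alpha_h\|_{L^2(\Omega)}$ uniformly. Classical Crouzeix--Raviart compactness~\cite{di2011mathematical} then yields, up to a subsequence, $\alpha_h\to\alpha$ strongly in $L^2(\Omega)$ with $\nabla_h\alpha_h\rightharpoonup\nabla\alpha$ weakly, and the limit lies in $A$ and satisfies the irreversibility constraint. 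Analogous DG compactness~\cite{di2008discontinuous,di2011mathematical} gives $u_h\to u$ in $L^2(\Omega)^d$, $\e_h(u_h)\rightharpoonup\e(u)$ weakly, and $|u_h|_J\to 0$, so that $u\in U_i$. Continuity of $a$ combined with strong $L^2$ convergence of $\alpha_h$ also yields $\Pi_h a(\alpha_h)\to a(\alpha)$ in every $L^p$ with $p<\infty$, so the weighted averages and the penalty coefficients $\gamma_F$ pass to their continuous counterparts.

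To pass to the limit in~\eqref{eq:discrete eq u 2} I would pick a smooth test $v\in U_0$, use its DG/CR interpolant $v_h\in U_{0,h}$, and check term-by-term: the penalty and skew consistency contributions at facets vanish because $|u_h|_J\to 0$ and $v_h$ becomes continuous at the interpolation rate, while the bulk term converges by strong $L^p$ convergence of $\Pi_h a(\alpha_h)$ combined with weak $L^2$ convergence of $\e_h(u_h)$; hence $u$ solves~\eqref{eq:continuous balance disp}. For the damage inequality, given $\beta\in K_i$, I would construct $\beta_h\in K_{i,h}$ respecting the irreversibility constraint (CR interpolation combined with a truncation) and pass to the limit using weak convergence of $\nabla_h\alpha_h$, strong $L^2$ convergence of $\alpha_h$, and a lower semicontinuity argument for the elastic driving term.

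The hardest step will be the passage to the limit in the damage inequality, because its elastic driving term $\int\mathbb{C}\e_h(u_h)\cdot\e_h(u_h)(\beta_h-\alpha_h)\,dx$ is quadratic in $u_h$ while I only have weak $L^2$ convergence of $\e_h(u_h)$. I expect to first establish strong $L^2$ convergence of $\e_h(u_h)$ by testing~\eqref{eq:discrete eq u 2} with $u_h$ itself, identifying the limit of the resulting quadratic form with its continuum counterpart, and invoking a Minty-type monotonicity argument in the spirit of the $\Gamma$-$\liminf$ analyses of~\cite{Giacomini-2005,Bellettini-Coscia-1994}. A secondary difficulty is that the weighted coefficient $\gamma_F$ can degenerate where $\alpha_h\to 1$; one must verify, using the interplay between the non-symmetric skew consistency term and the residual penalty, that the jump seminorm remains controlled uniformly during the passage to the limit.
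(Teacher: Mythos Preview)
Your plan mirrors the paper's proof almost exactly: the appendix proceeds by (i) deriving $h$-uniform bounds on $\|u_h\|_{ip}$ and $\|\nabla_h\alpha_h\|_{L^2}$ by testing~\eqref{eq:discrete eq u 2} with $u_h-\pi_h f_i$ and~\eqref{eq:discrete inequality} with $\alpha_{i-1,h}$, (ii) a Brouwer fixed point for existence of a coupled solution, (iii) passing to the limit in~\eqref{eq:discrete eq u 2} with smooth tests, (iv) upgrading to strong convergence of $\e_h(u_h)$ by testing~\eqref{eq:discrete eq u 2} with $u_h-\pi_h f_i$, and (v) passing to the limit in~\eqref{eq:discrete inequality}. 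The only structural difference is cosmetic: the paper applies Brouwer to a map on pairs $(v_h,\beta_h)\mapsto(u_h,\alpha_h)$ rather than your composition $T_2\circ T_1$ on damage fields alone.

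There is, however, one genuine slip in your argument. You claim the facet contributions in step (iii) vanish ``because $|u_h|_J\to 0$''. This is not available: the coercivity in Proposition~\ref{th:discrete solution disp} yields only $|u_h|_J\le C$, and nothing in the compactness step forces the jump seminorm to tend to zero (a piecewise-linear sequence can converge strongly in $L^2$ to an $H^1$ limit while keeping $|u_h|_J$ bounded away from zero). The correct reason---which you also state---is that the \emph{test} function is smooth, so $|\pi_h\varphi|_J=O(h)$, and each facet term is controlled by a product of a bounded quantity involving $u_h$ and a vanishing quantity involving $\pi_h\varphi$; the paper invokes precisely this, writing that the terms vanish ``because $\varphi,v_i\in (H^1(\Omega))^d$''. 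Drop the $|u_h|_J\to 0$ claim. Similarly, for step (iv) the paper does not use a Minty-type argument: it simply tests~\eqref{eq:discrete eq u 2} with $u_h-\pi_h f_i$, notes that the skew consistency terms cancel when both slots carry the same function, and reads off strong convergence of $\e_h(u_h)$ directly from the resulting energy identity.
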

For concision, the proof is postponed to Appendix \ref{sec:proof}.

\section{Numerical experiments}
\label{sec:numerical tests}
In all the following numerical experiments, the penalty parameter $\zeta$ appearing in~\eqref{eq:discrete bilinear form} is chosen as $\zeta := 2\mu$, where $\mu$ is the second Lam\'e coefficient and $\eta_\ell$ is taken to be such that $\eta_\ell = 10^{-6}$ in all the numerical tests.
The numerical implementation is based on the \texttt{C++} and \texttt{Python} library \texttt{FEniCS}~\cite{LoggEtal2012}.

\subsection{Surfing boundary conditions}
Owing to classical~\cite{Zehnder-2012a} and more modern~\cite{Chambolle-Francfort-EtAl-2009a} studies, a single crack propagating in an isotropic homogenous medium typically does so in mode-I. 
Formally, the jump of the displacement field is expected to be along a direction orthogonal to the tangent of the crack near its tip.
Consider a  polar coordinate system emanating at the crack tip along its tangent direction.
In this setting, the asymptotic behavior of the displacement field is the plane-stress mode-I $K$-dominant field $u_I$ defined by

\begin{equation}
   \label{eq:modeI}
   u_I(r,\theta) := \frac{K_I}{2\mu} \sqrt{\frac{r}{2\pi}} (\kappa - \cos(\theta)) \left( \cos\left(\frac{\theta}{2}\right), \sin\left(\frac{\theta}{2}\right) \right),
\end{equation}
where $K_I = \sqrt{EG_c}$ and $\kappa = \frac{3-\nu}{1+\nu}$.

Consider a rectangular domain $(0,W) \times(-H/2,H/2)$  occupied by a brittle material (see Figure~\ref{fig:surfingDomain}).

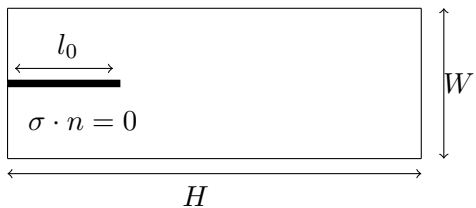
\begin{figure} [!htp]
   \centering
   \begin{tikzpicture}
   \draw (-2.5,1) -- (3,1);
   \draw (-2.5,-1) -- (3,-1);
   \draw[] (-2.5,0.) -- (-2.5,1);
   \draw (-2.5,0.) -- (-2.5,-1);
   \draw (3,1) -- (3,-1);
   \draw [<->] (-2.5,-1.2) -- (3,-1.2);
   \draw (0,-1.5) node{$H$};
   \draw[line width=1mm] (-2.5,0) -- (-1,0);
   \draw (-1.5,-0.5) node {$\sigma \cdot n = 0$};
   \draw [<->] (3.3,-1) -- (3.3,1);
   \draw (3.5,0) node{$W$};
   \draw (-1.7,0.5) node{$l_0$};
   \draw[<->] (-2.4,0.2) -- (-1.1,0.2);
   \end{tikzpicture}
   \caption{Surfing problem: geometry of the domain.}
   \label{fig:surfingDomain}
\end{figure} 
The idea of the surfing boundary condition, introduced in~\cite{Hossain-Hsueh-EtAl-2014a} is to prescribe the boundary displacement associated to a translating crack $\Gamma(t) = (0,Vt) \times \{0\}$ in mode-I:
\[ w_i(x,y) = U_I(x-Vt_i,y), \]
where $U_I$ denotes the expression of $u_I$ in cartesian coordinates.
The expected solution for this problem consists of a phase-field representation of $\Gamma(t)$ and associated equilibrium displacement.


   In the classical (CG/CG) regularization, it is now well understood that the surface energy (hence the elastic energy release rate) is overestimated.
   This effect can be explained by the fact that phase-field ``cracks'' in the \ATone{} model correspond to a one-element wide strip of element where the damage field $\alpha$ takes values close to 1 and the displacement jumps sided by transition zones where the damage field decays quadratically back to 0.
   The energetic contribution of the former zone is of order $3G_ch/8\ell$ while that of the later is close to $G_c$  (see Section 8.1.1 in~\cite{Bourdin-Francfort-EtAl-2008b}).
   In contrast, in the proposed discretization, the displacement is allowed to jump along edges of internal elements so that the damage field can localize along a set of \emph{edges}. 
   Subsequently, for a given $h$ and $\ell$, our scheme is expected to provide a better approximation of the surface energy.

   Of course this could come at a cost since for a given mesh, both $\mathbb{P}^1$-discontinuous and Crouzeix-Raviart elements lead to a larger number of degrees of freedom compared to standard $\mathbb{P}^1$-Lagrange elements. 
   In Figure~\ref{fig:surfing}, we performed surfing simulations on a domain of width $W=5$ and height $H=1$ with a initial crack of length $0.3$ using the classical and proposed discretization.
   In order to obtain a comparable number of degrees of freedom for each problem, we set the mesh size to $h \simeq 0.0048 $ (leading to $2,069,307$ degrees of freedom) for the CG/CG scheme and $h\simeq 0.011$ ($2,028,499$ degrees of freedom) for the proposed scheme.
   The non-dimensional material properties are the same as in Fig.1 of~\cite{tanne2018crack}, that is $E = 1$, $\nu = 0.3$ and $G_c = 1.5$ in plane stress, and the loading rate is $V=4$.
   In all cases, after a loading stage, a phase-field crack propagates at a constant rate.
   The normalized crack ``velocity''  $\frac{\partial}{\partial t} \frac{\mathcal{E}^{(s)}_\ell(u,\alpha)}{G_cV}$, computed by linear fitting of the surface energy as a function of a loading is shown in Table~\ref{tab:crack speeds}.
   We see that even for a comparable number of degrees of freedom and despite a larger element size, (hence ratio $\ell/h$) our scheme leads to a much better approximation of the surface energy.

\begin{table}
   \centering
   \begin{tabular}{|c|c|c|}
   \hline
   scheme & $\ell/h$ & crack ``velocity''  \\ \hline
   CG/CG & 2 &  1.123 \\ \hline
   CG/CG & 5 &  1.049 \\ \hline
   DG/CR & 2 &  1.075 \\ \hline
   DG/CR & 5 &  1.036 \\ \hline
   \end{tabular}
   \caption{Surfing problem: Crack ``velocity''.}
   \label{tab:crack speeds}
\end{table}
%

   Figure~\ref{fig:surfing} shows the normalized elastic energy release rate $G := -\frac{1}{G_c} \partial \mathcal{P} / \partial l$, where $\mathcal{P}$ denotes the elastic energy and $l$ the crack length, computed with the $G\!-\!\theta$ method~\cite{destuynder1981interpretation,sicsic2013gradient,li2016gradient}, as a function of the loading parameter for the classical and proposed schemes for the same values of the ratio $\ell/h$.
   Again, we see that our discretization scheme outperforms the standard CG/CG scheme even for a comparable number of degrees of freedom.
      
\begin{figure}
   \centering
   \includegraphics[width=.49\textwidth]{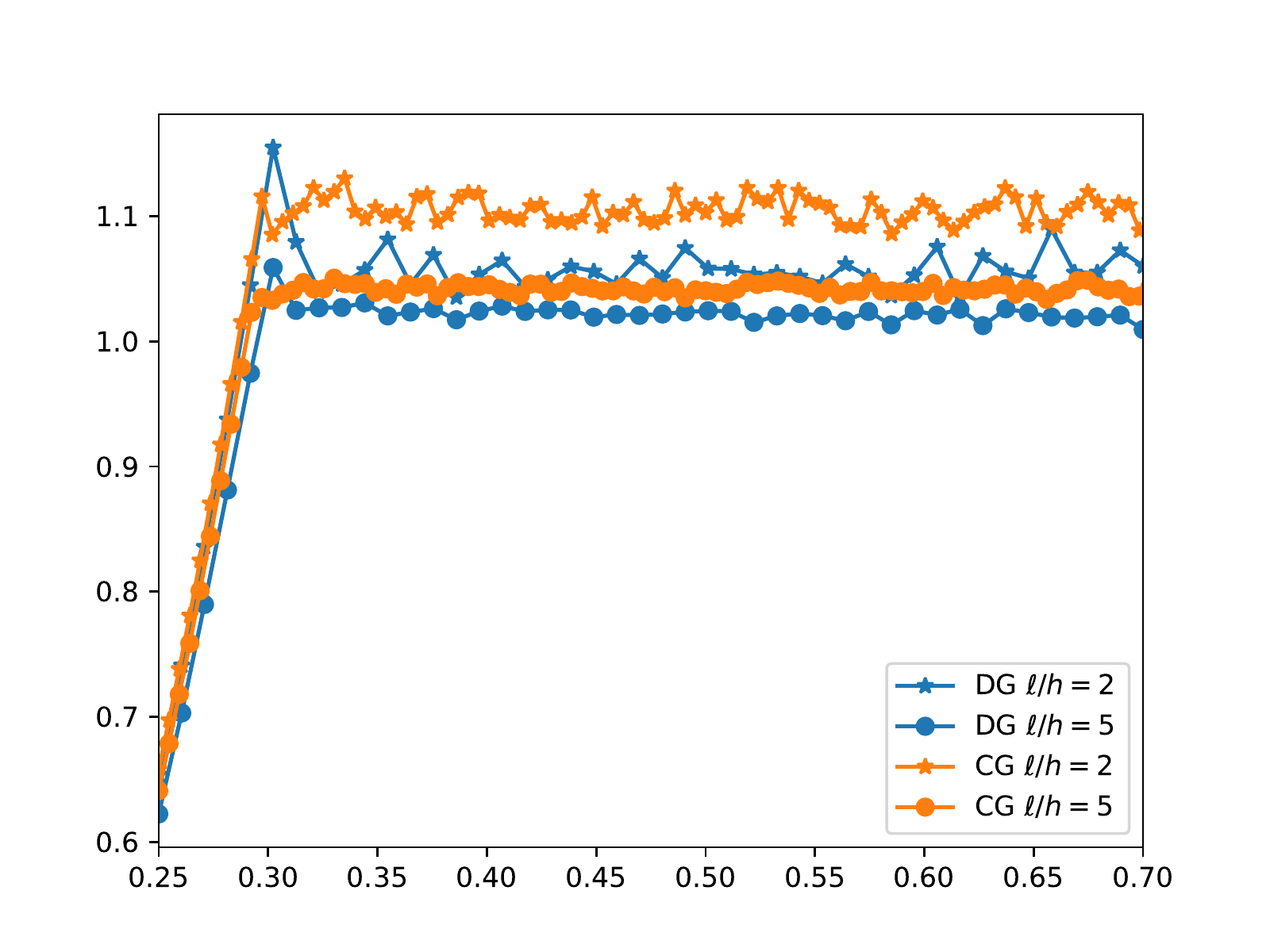}
   \caption{Surfing problem: Evolution of the energy release rate.}
   \label{fig:surfing}
\end{figure}

\subsection{Three-point bending test}
We present numerical simulations of a V-notched three-point bending test, using the specific geometry and loading from~\cite{ambati2015review}, shown in Figure~\ref{fig:bending geometry}.
\begin{figure}[!htp]
   \centering
   \includegraphics[width=.5\textwidth]{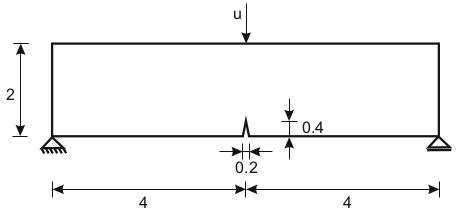}
   \caption{Three-point bending test: geometry from~\cite{ambati2015review}.}
   \label{fig:bending geometry}
\end{figure}
The vertical displacement $w_i(y) = (0,-t_i)$ is prescribed on a region of width $\SI{4.0}{\milli\meter}$ centered on the  upper edge of the domain. 
The lower-left corner of the sample is clamped and the vertical displacement of the lower-right corner has its vertical displacement blocked, while rest of the boundary is left stress-free ($\sigma \cdot n = 0$). 
The boundary value of the damage variable to 1 along both sides of the V-notch, following the guidelines of~\cite{tanne2018crack}.
The material properties are $\lambda = \SI{12.0}{\kilo\newton\per\square\milli\meter}$, $\mu = \SI{8.0}{\kilo\newton\per\square\milli\meter}$ (corresponding to $E = \SI{45}{\kilo\newton\per\square\milli\meter}$ and $\nu = 0.25$ in plane-stress conditions) and $G_c = \SI{5.4d-4}{\kilo\newton\per\milli\meter}$.
The mesh is uniform and has a size $h=\SI{0.016}{\milli\meter}$ and the regularization length is $\ell = 2h$.
The computation uses $2,348,592$ dofs.
Increments in the Dirichlet boundary conditions are chosen as $\Delta u_D = \SI{d-3}{\milli\meter}$.
Figure~\ref{fig:beam} shows the crack pattern and the load-displacement curve.
\begin{figure}
   \centering
\includegraphics[width=0.49\textwidth]{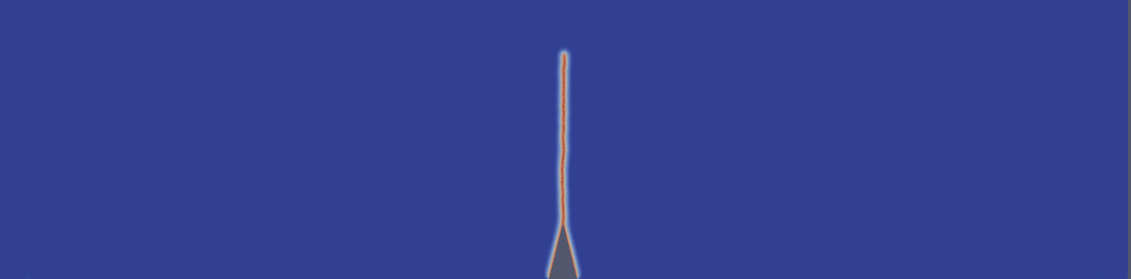}
\includegraphics[width=0.49\textwidth]{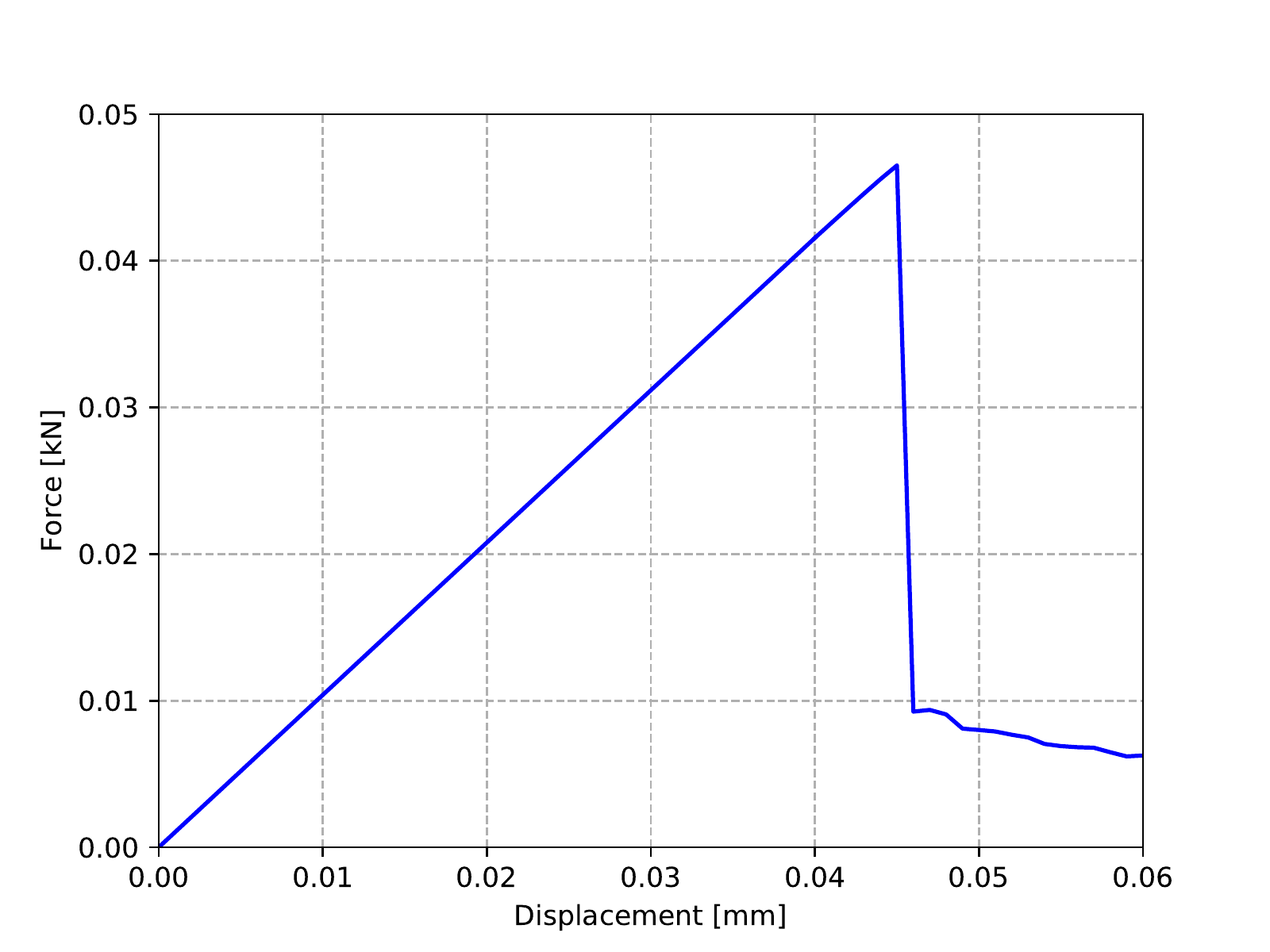}
\caption{Three-point bending test. Left: crack path. Right: load-displacement curve.}
\label{fig:beam}
\end{figure}
The crack path and the load displacement curve are consistent with the results reported in~\cite{ambati2015review}, with a peak load of \SI{4.65e-2}{\kilo\newton} attained for a displacement load of \SI{4.50}{\milli \meter}.

\subsection{Mixed-mode Compact-Tension test}
Again, we repeat the variant of a compact-tension test initially presented in~\cite{ambati2015review} and repeated in~\cite{muixi2020hybridizable}.
A Dirichlet boundary condition $w_i(y) = (0,t_i)$ is imposed in the upper hole on the left of the sample whereas the lower hole on the left of the sample is clamped.
The rest of the boundary of the domain is left stress-free $\sigma \cdot n = 0$, and again, the value of the damage parameter is prescribed to 1 along the initial crack faces.
The geometry and loading are shown in Figure~\ref{fig:CT geometry}.
\begin{figure}[!htp]
   \centering
   \includegraphics[width=.5\textwidth]{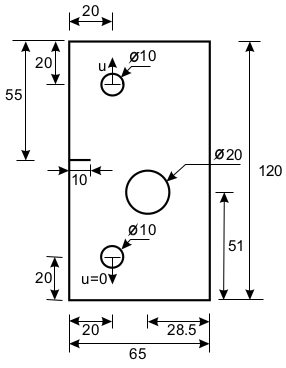}
   \caption{Notched plate with a hole. geometry from~\cite{ambati2015review}.}
   \label{fig:CT geometry}
\end{figure}
The material properties are $\lambda = \SI{1.94}{\kilo\newton\per\square\milli\meter}$, $\mu = \SI{2.45}{\kilo\newton\per\square\milli\meter}$ and $G_c = \SI{2.28d-3}{\kilo\newton\per\milli\meter}$.
The mesh is uniform and has a size $h=\SI{0.39}{\milli\meter}$ and the regularization length is $\ell = \SI{0.78}{\milli\meter}$.
The computation uses $2,348,592$ dofs.
Increments in the Dirichlet boundary conditions are chosen as $\Delta u_D = \SI{d-3}{\milli\meter}$.
The offset location of the hole leads to a mixed loading mode ahead of the initial crack tip, producing a curved crack path connecting to the hole.

The loading upon which the crack nucleates from the hole and reaches for the free edge is strongly $\ell$--dependent, as expected from the analysis of~\cite{tanne2018crack}.
This effect is studied below and we focus therefore on the crack path between the initial crack and the offset hole.
Figure~\ref{fig:CT}(left,center) compares the crack path obtained using the DG/CR scheme and that obtained using a standard CG/CG discretization with the open-source code mef90/vDef~\cite{mef90} with a mesh size $h=0.15$ and regularization length $\ell=0.6$.
Both paths are essentially identical and also match that of~\cite{ambati2015review}.
The load-displacement curves for the DC/CR and CG/CG calculations, shown in Figure~\ref{fig:CT}(right) are also in strong agreement.
Conclusive comparison with a carefully validated code, give credentials to the proposed scheme.
\begin{figure}
   \centering
   \includegraphics[width=0.23\textwidth]{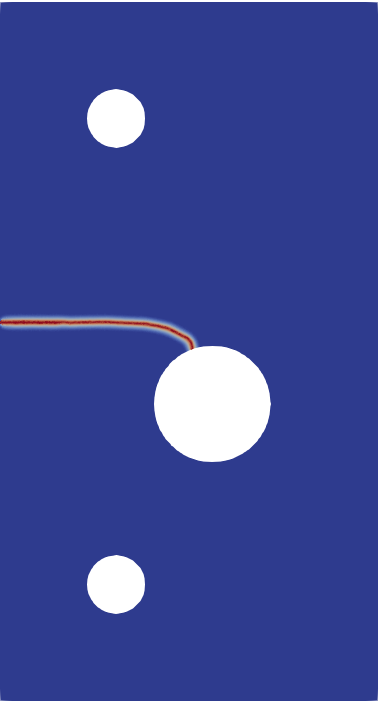}
   \includegraphics[width=0.23\textwidth]{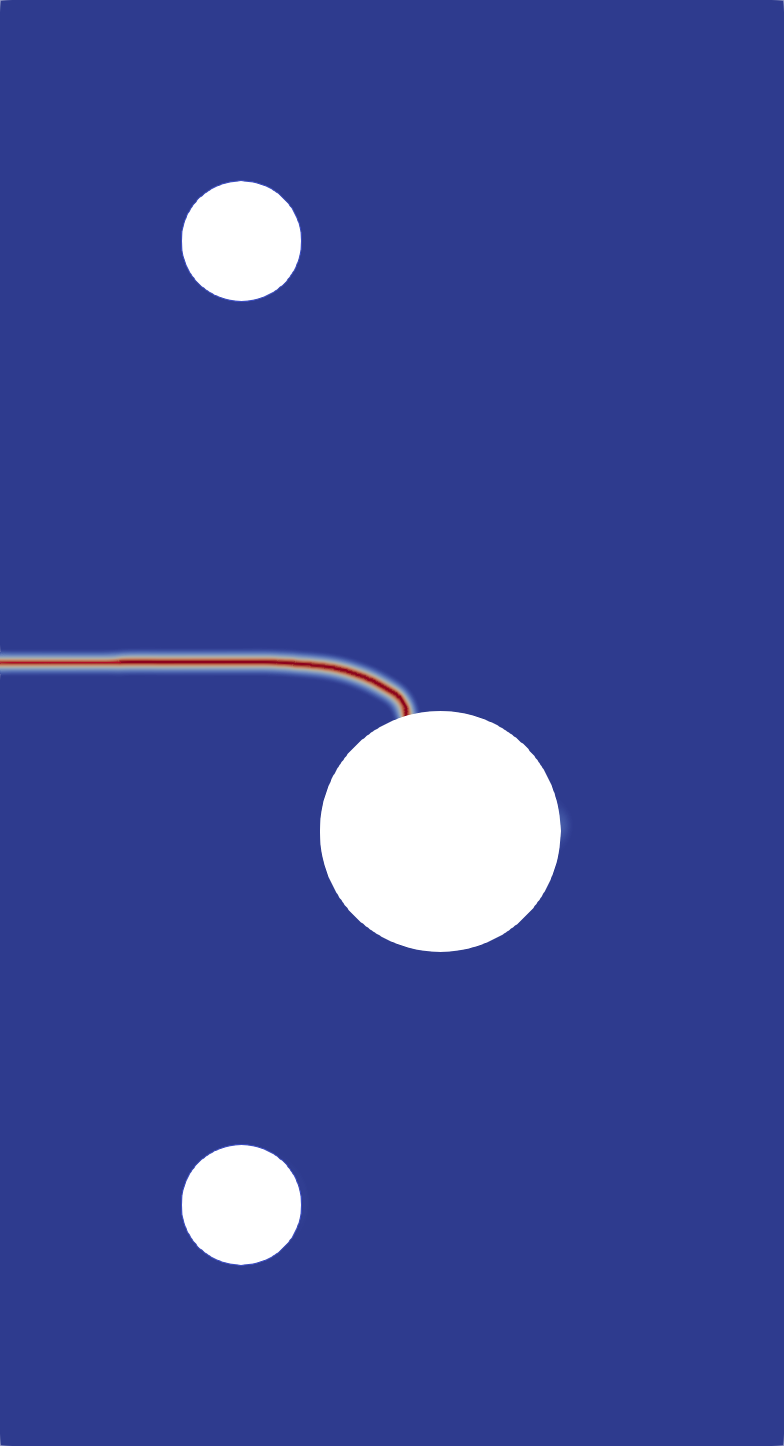}
   \includegraphics[width=0.5\textwidth]{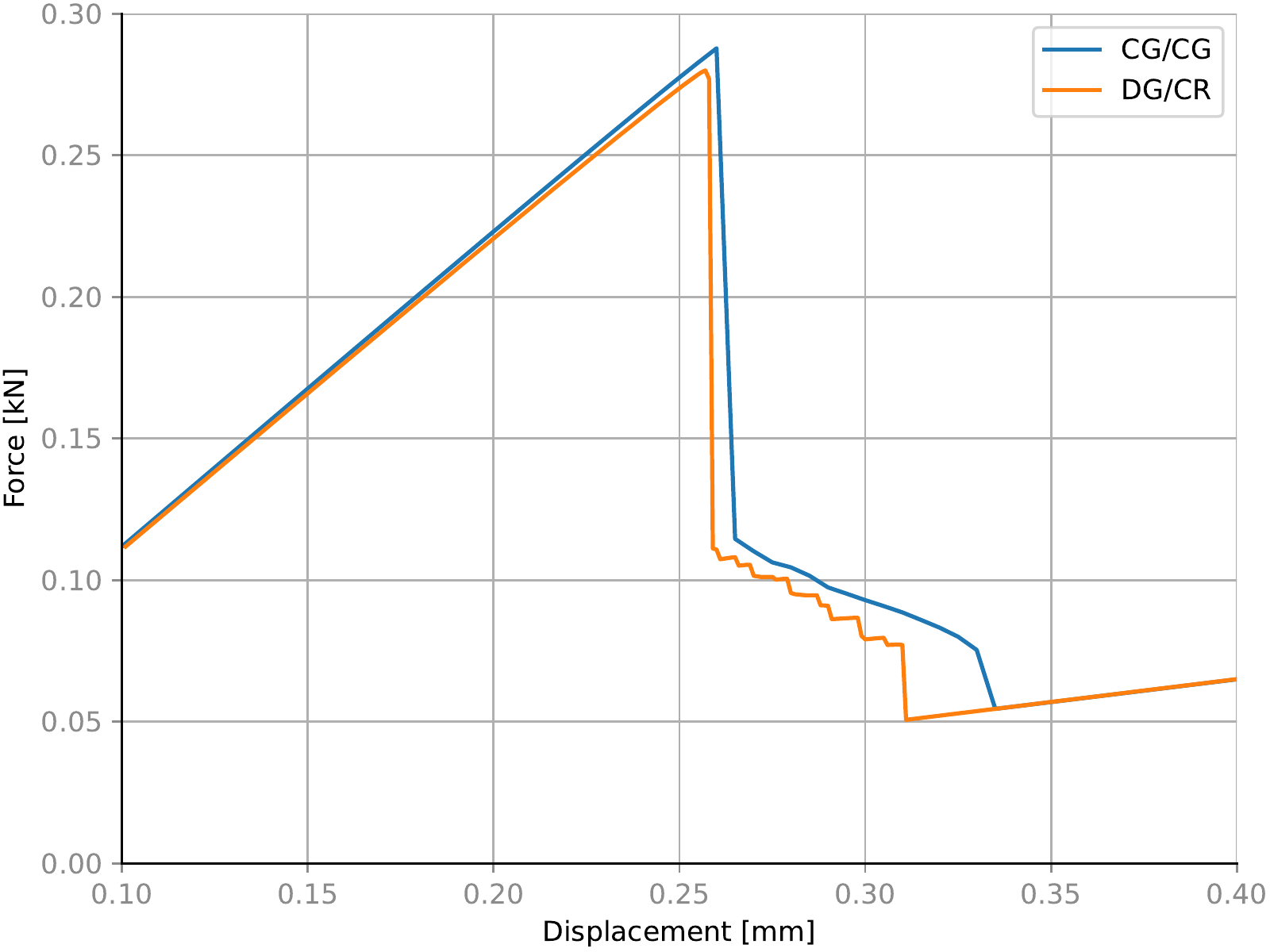} 
   \caption{Notched plate with a hole from~\cite{ambati2015review}. Left: crack path using the proposed approach Center: crack path using a CG/CG discretization. Right: load-displacement curves for the DG/CR and CG/CG schemes.} 
   \label{fig:CT}
\end{figure}

\subsection{One-dimensional nucleation test}
\label{sec:traction test}
The link between regularization length and nucleation stress has become an important feature of phase-field models of fracture, when predicting crack nucleation~\cite{tanne2018crack}.
The main ingredient in establishing this link is the study of a one-dimensional bar under uniaxial tension
as studied in~\cite{pham2011issues} (continuous setting) and~\cite{baldelli2021numerical} (discrete setting).
In these articles, a purely elastic evolution is observed for small-enough loadings.
A critical load upon which the elastic configuration becomes unstable, leading to the nucleation of a single fully-developed crack (a \emph{stable} critical point of the phase-field energy) can be computed in closed-form.
In~\cite{baldelli2021numerical}, the authors have developed a method to study the stability of an evolution with a classical CG/CG discretization.

Because of the loss of the variational structure of our discrete problem, it is not clear that the outcome of the stability analysis on the continuous problem provides any insight on the nucleation properties of the proposed model.
We performed series of numerical simulations to establish that it is the case.

The computations are performed under a plane stress assumption. The domain is the rectangle $(0,1) \times (-0.05,0.05)$.
The material parameters are $E = 100$, $\nu = 0$ and $G_c = 1$. The residual strength of the material is $\eta_\ell = 10^{-6}$.
A Dirichlet boundary condition $w_i = t_i$ is applied on the normal component of the displacement on the left and right edges of the beam. Homogeneous Neumann boundary conditions are enforced on the upper and lower boundaries.
The internal length is chosen as $\frac{\ell}{h} = 5$.
The values of $\ell$ are given in Figure~\ref{fig:nucleation 1d}.
Let $t_f$ be the time corresponding to nucleation for a space-continuous evolution, one has
\[ t_f := \frac{L}{E} \sqrt{\frac{3G_c}{8E\ell}}, \quad \text{which corresponds to a stress} \quad  \sigma_f := \sqrt{\frac{3G_c}{8E\ell}}. \] 
The simulation is performed over the time-interval $[0,T]$ where $T=t_f\times 1.01$.
For $t = t_f \times 0.99$, the evolution is purely elastic for all computations.
Nucleation is detected at $t = t_f \times 1.01$ as shown in Figure~\ref{fig:after cracking}.
\begin{figure}[!htp]
   \centering
   \includegraphics[width=0.5\textwidth]{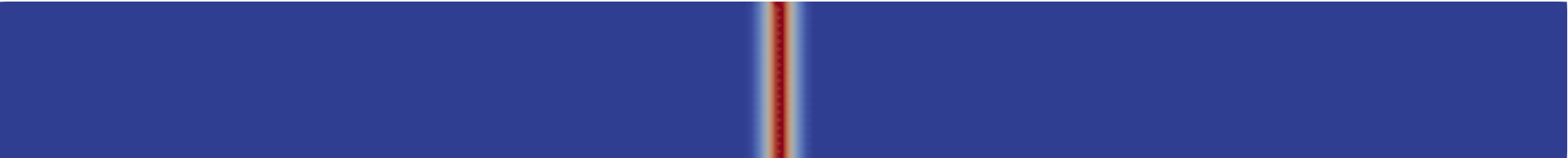}
   \caption{2d beam in uniaxial tension: $t=t_f \times 1.001$.}
\label{fig:after cracking}
\end{figure}
Figure~\ref{fig:nucleation 1d} shows the critical time computed compared with the analytical solution.

\begin{figure}[!htp]
   \centering
   \includegraphics[width=.5\textwidth]{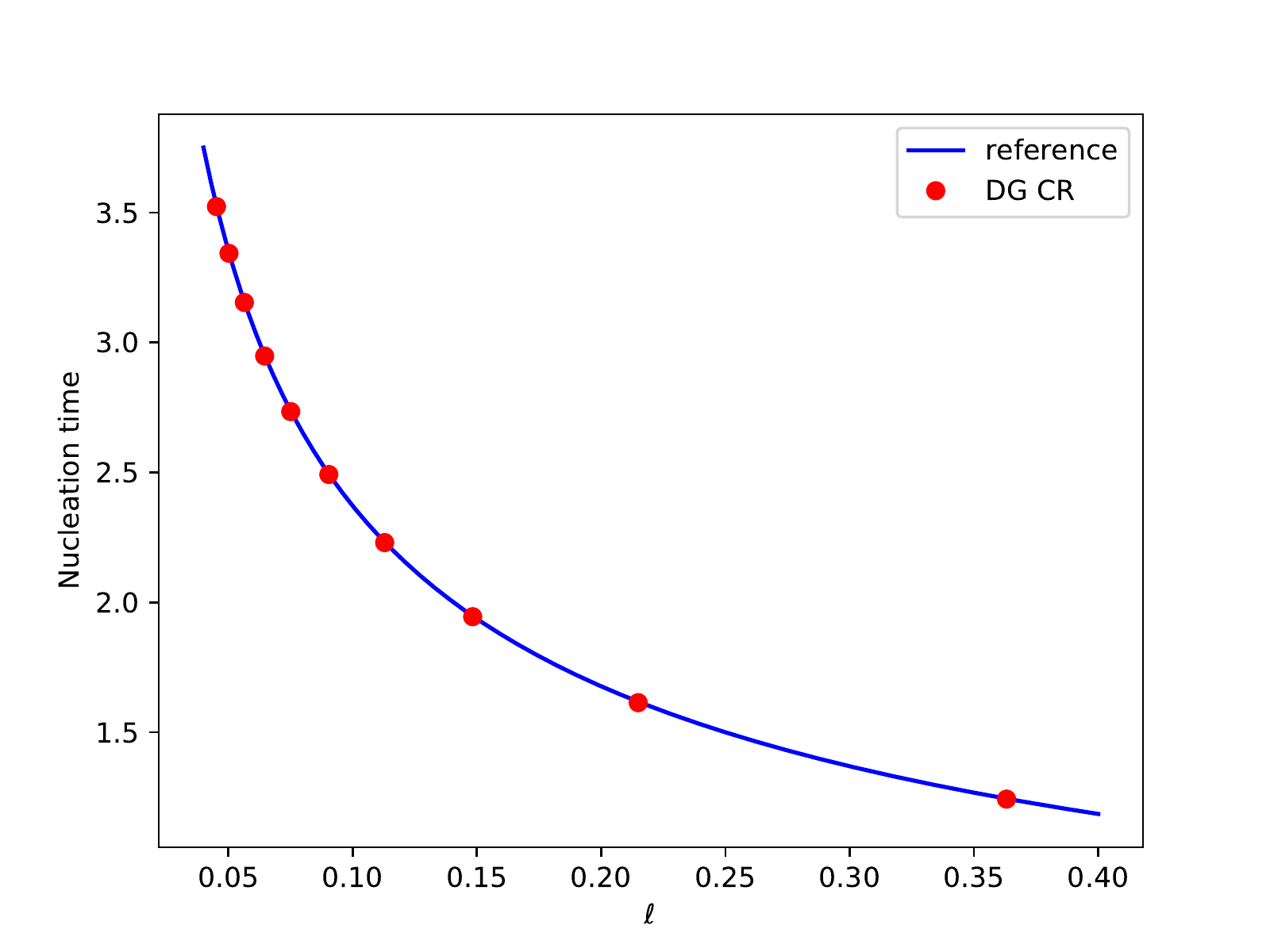}
   \caption{1d nucleation test: computed nucleation time depending on $\ell$.}
   \label{fig:nucleation 1d}
\end{figure}

\subsection{Two-dimensional nucleation test}
The behaviour of a sample under uniaxial tension has been generalized to the case of a uniformly loaded two-dimensional sample in~\cite[Appendix A]{kumar2020revisiting}.
The domain $\Omega$ is the unit disk centered at the origin. Dirichlet boundary conditions are imposed on the entire boundary as $w_i(x) = t_i \bar{E} \cdot x$, where $ \bar{E} \in \mathbb{R}^{2\times2}$ is such that 
\[ \bar{E} = \frac1{E} \left( \begin{matrix}
\cos(\theta) - \nu \sin(\theta) & 0 \\
0 & \sin(\theta) - \nu \cos(\theta) \\
\end{matrix} \right), \]
where $\theta$ is the angle in polar coordinates in $\Omega$.
Therefore, one has 
\[ \sigma_i =  t_i \left( \begin{matrix}
\cos(\theta) & 0 \\
0 & \sin(\theta) \\
\end{matrix} \right). \]
Following~\cite{kumar2020revisiting}, nucleation occurs at
\[ t_f := \sqrt{\frac{3G_cE}{8\ell (1-\nu\sin(2\theta))}}. \]
As in Section~\ref{sec:traction test}, the nucleation time is computed through a stability analysis involving the continuous energy of the system.
Thus, a similar analysis cannot be performed any longer with the proposed non-variational method.

The material parameters are $E=1$, $\nu=0.3$ and $G_c = 1.5$. The length of the phase-field is chosen as $\frac{\ell}{h} = 5$ and three meshes of sizes $h=0.04$, $h=0.02$ and $h=0.01$ are used for the test.
We want to verify that the computed elastic domain corresponds to the theoretical one.
Figure~\ref{fig:nucleation 2d} shows the principal stresses (the eigenvalues of $\sigma_i$) depending on the sampled values of $\theta$ with respect to the analytical solution given above.
\begin{figure}[!htp]
   \centering
   \includegraphics[width=.5\textwidth]{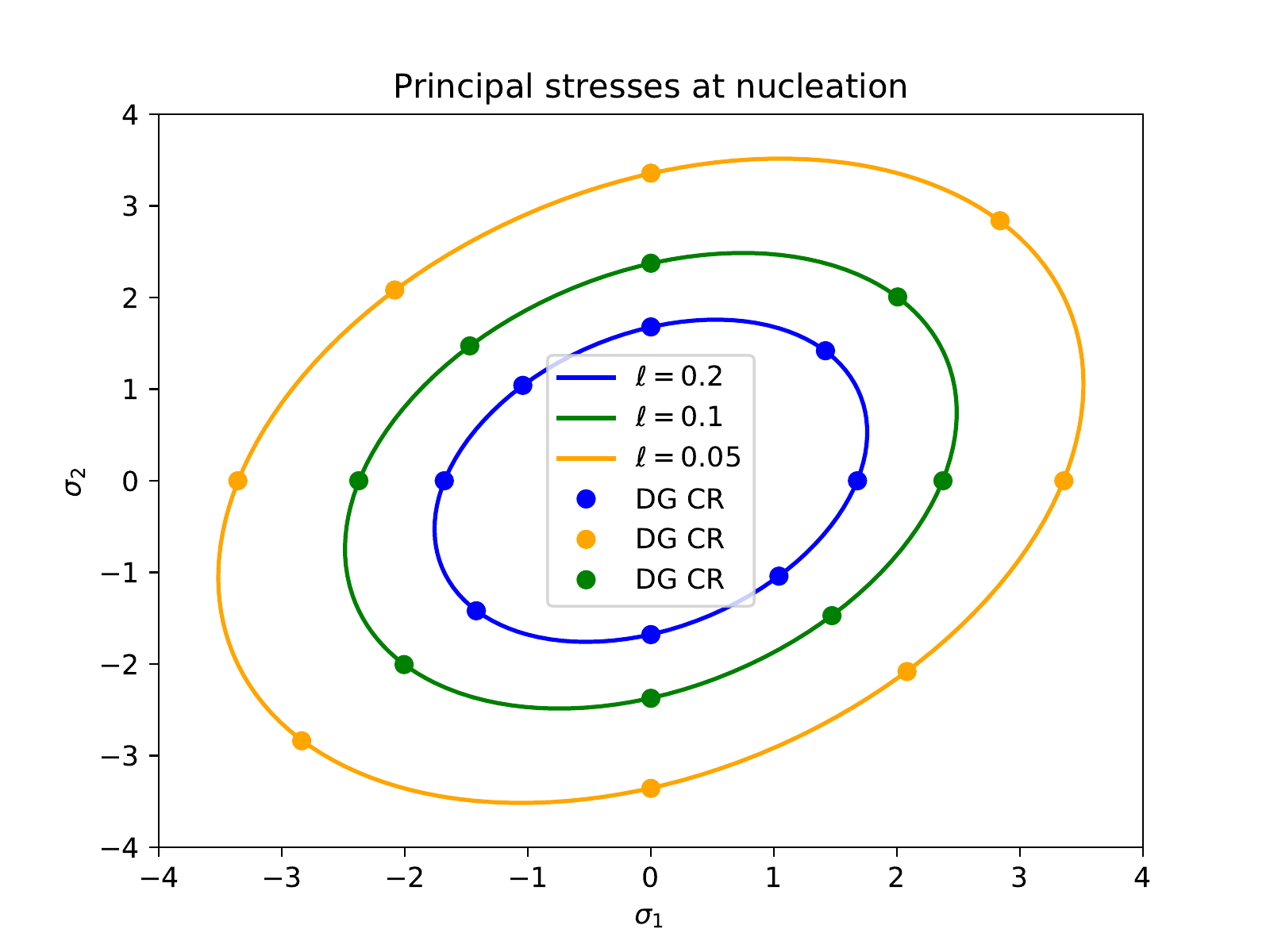}
   \caption{2d nucleation test: principal stresses at nucleation.}
   \label{fig:nucleation 2d}
\end{figure}

\section{Conclusion}
\label{sec:conclusion}
In this article, a mixed $\mathbb{P}^1$ discontinuous Galerkin and $\mathbb{P}^1$ Crouzeix--Raviart non-symmetric approximation of phase-field models for brittle fracture has been proposed in Section~\ref{sec:discrete setting}.
The non-symmetry brings improved stability compared to a symmetric method.
However, the methods loses its variational property in the sense that the discrete first order stability conditions are not discrete Euler--Lagrange equations associated to the minimization of a discrete energy.
The discretization is proved to converge towards the continuous phase-field model presented in Section~\ref{sec:continuous model}.
In Section~\ref{sec:numerical tests}, numerical evidence of the capabilities of the method regarding both crack nucleation and crack propagation are given. 
Additional investigations into the approximation of second order stability conditions for the non-symmetric method could prove valuable.

\section*{Acknowledgements}
FM would like to thank A.~Chambolle for stimulating discussions.
FM's work is supported by the US National Science Foundation under grant number OIA-1946231 and the Louisiana Board of Regents for the Louisiana Materials Design Alliance (LAMDA).
Part of this work was performed while BB was the A.K. \& Shirley Barton Professor of Mathematics at Louisiana State University.
BB acknowledges the support of the Natural Sciences and Engineering Research Council of Canada (NSERC), RGPIN-2022-04536.

\section*{Code availability}
The source code and data files for all examples are available at \url{https://github.com/marazzaf/DG_CR.git}

\appendix
\section{Proof of theorem~\ref{th:convergence sym}}
\label{sec:proof}
Assume that the initial damage field $\alpha_{-1}$ is known and define $\alpha_{-1,h} := \mathcal{I}_{CR} \alpha_{-1}$. The following proof assumes that at each time-step $t_i$, $\alpha_{i-1,h}$ converges strongly in $H^1(\Omega)$ towards $\alpha_{i-1} \in H^1(\Omega)$.

\begin{lemma} [Compactness]
\label{th:compactness sym}
Let independently $u_h \in U_{i,h}$ be solution of~\eqref{eq:discrete eq u 2} and $\alpha_h \in A_h$ be a solution of~\eqref{eq:discrete inequality}.
There exists $v_{i} \in U_i$ and $\beta_i \in A$ such that, up to a subsequence, $u_h \to v_i$ strongly in $\left(L^2(\Omega)\right)^d$ and $\alpha_h \to \beta_i$ strongly in $L^2(\Omega)$, and $\nabla_h u_h \rightharpoonup \nabla v_i$ weakly in $\left(L^2(\Omega)\right)^{d \times d}$ and $\nabla_h \alpha_h \rightharpoonup \nabla \beta_i$ weakly in $L^2(\Omega)^d$.
\end{lemma}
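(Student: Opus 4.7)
The plan is to derive uniform a priori bounds on $u_h$ in the interior penalty norm $\|\cdot\|_{ip}$ and on $\alpha_h$ in a broken $H^1$ norm, then invoke established compactness results for broken polynomial spaces.

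First I would bound the displacement. Writing $u_h = u_h^0 + \pi_h f_i$ with $u_h^0 \in U_{0,h}$ and testing~\eqref{eq:discrete eq u 2} against $u_h^0$ kills the non-symmetric consistency terms on the diagonal, leaving
\begin{equation*}
\int_\Omega (a_h(\alpha_h)+\eta_\ell)\mathbb{C}\e_h(u_h^0)\cdot\e_h(u_h^0)\,dx + \sum_{F\in\mathcal{F}_h^i}\frac{\zeta\gamma_F}{h_F}\|[u_h^0]_F\|_{L^2(F)}^2 = -\mathcal{U}_h(\alpha_h; \pi_h f_i, u_h^0).
\end{equation*}
Since $\ell$ (and therefore $\eta_\ell$) is fixed, the coefficients satisfy $\eta_\ell \leq a_h(\alpha_h)+\eta_\ell \leq 1+\eta_\ell$ and $\gamma_F \geq \eta_\ell^2/(1+\eta_\ell)$, so the left-hand side is coercive in $\|\cdot\|_{ip}$. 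Cauchy--Schwarz combined with a discrete trace inequality on the right-hand side then yields a uniform bound on $\|u_h\|_{ip}$.

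For the damage, I would use that $\alpha_h$ minimizes $\beta_h \mapsto \mathcal{E}_{\ell,h}(u_h, \beta_h)$ over $K_{i,h}$ (Proposition~\ref{th:damage ineq}). Since $\alpha_{i-1,h}$ is admissible by the standing hypothesis, $\mathcal{E}_{\ell,h}(u_h,\alpha_h) \leq \mathcal{E}_{\ell,h}(u_h,\alpha_{i-1,h})$. The second term in $\mathcal{E}_{\ell,h}$ controls $(2G_c\ell/c_w)\|\nabla_h\alpha_h\|_{L^2}^2$, while a truncation argument together with the constraint $\alpha_{i-1,h}\leq \alpha_h$ gives $0\leq \alpha_h \leq 1$ pointwise. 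Together these bound $\|\alpha_h\|_{L^2}^2 + \|\nabla_h\alpha_h\|_{L^2}^2$ uniformly in $h$.

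With these estimates, the compactness theorem for DG spaces of Di Pietro--Ern~\cite{di2011mathematical} applied to $u_h$ produces $v_i \in (H^1(\Omega))^d$ satisfying $v_i = w_i$ on $\partial\Omega_D$ (from $\pi_h f_i \to f_i$ in $L^2$) and the claimed strong $L^2$ and weak broken-gradient convergences along a subsequence. The analogous compactness result for Crouzeix--Raviart elements, in which the zero average jump condition $\int_F [\alpha_h]_F = 0$ allows comparison with an $H^1$-conforming interpolant (see~\cite{ern_guermond}), delivers $\beta_i \in A$ with the same mode of convergence for $\alpha_h$, the pointwise bound $0\leq \beta_i \leq 1$ and the homogeneous Dirichlet condition on $\partial\Omega_D$ both passing to the limit.

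The principal obstacle is the degeneracy of the coefficients $a_h(\alpha_h)+\eta_\ell$ and $\gamma_F$, which may become arbitrarily small on highly damaged cells; the saving grace is that $\ell$ (hence $\eta_\ell$) is held fixed as $h \to 0$, providing a strictly positive uniform floor. A secondary technicality is estimating the cross-term $\mathcal{U}_h(\alpha_h; \pi_h f_i, u_h^0)$, which couples facet integrals of the lift with the jump seminorm of $u_h^0$; this is handled via standard discrete trace and inverse inequalities, with the extra constant absorbed once $h$ is small enough relative to $\eta_\ell$.
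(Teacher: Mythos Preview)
Your proposal is correct and follows essentially the same route as the paper: test~\eqref{eq:discrete eq u 2} against the homogeneous part of $u_h$ so that the skew-symmetric consistency terms disappear, use the floor $a_h(\alpha_h)+\eta_\ell\ge\eta_\ell$ and $\gamma_F\ge\eta_\ell^2/(1+\eta_\ell)$ for coercivity, then compare $\alpha_h$ with the admissible competitor $\alpha_{i-1,h}$ to bound $\|\nabla_h\alpha_h\|_{L^2}$ before invoking the DG compactness of~\cite{di2011mathematical} and the Crouzeix--Raviart compactness. The only cosmetic differences are that the paper tests with $u_h-f_i$ (exploiting $[f_i]_F=0$) rather than $u_h-\pi_h f_i$, and obtains the damage bound from the variational inequality~\eqref{eq:discrete inequality} with $\beta_h=\alpha_{i-1,h}$ instead of the equivalent energy comparison you use.
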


\begin{proof}
Since $w_i \in \left(H^{1/2}(\partial \Omega)\right)^d$, there exist $f_i \in \left(H^1(\Omega)\right)^d$ such that ${f_i}_{|\partial \Omega_D} = w_i$ on $\partial \Omega_D$.
Therefore, one has
\[ \mathcal{U}_h(\alpha_h;u_h,u_h - f_i) = 0. \]
where $[f_i]_F = 0, \forall F \in \mathcal{F}^i_h$ because $f_i \in \left(H^1(\Omega)\right)^d$.
As in the proof of Proposition \ref{th:discrete solution disp}, one has
\[ \frac{2 \mu \eta_\ell}{K^2} \Vert \e_h(u_h) \Vert^2_{L^2} + \frac{\zeta \eta_\ell^2}{1+\eta_\ell} |u_h|^2_J \le \mathcal{U}_h(\alpha_h;u_h,u_h). \]
Therefore,
\[ C_1 \Vert u_h \Vert_{ip}^2 \le \mathcal{U}_h(\alpha_h;u_h,u_h) = \mathcal{U}_h(\alpha_h;u_h,f_i) \\ \le C_2 \Vert u_h \Vert_{ip}, \]
where $C_1$ and $C_2$ are non-negative constants.

Thus $\Vert u_h \Vert_{ip}$ is bounded from above.
We can apply Kolmogorov compactness criterion~\cite[p.~194]{di2011mathematical}. Thus, there exists $v_i \in U_i$ such that, up to a subsequence, $u_h \to v_i$ strongly in $\left(L^2(\Omega)\right)^d$ and $\nabla_h u_h \rightharpoonup \nabla v_i$ weakly in $\left(L^2(\Omega)\right)^{d\times d}$.

Now let us get a bound on the damage.
Testing~\eqref{eq:discrete inequality} with $\alpha_h$, one has
\[ \mathcal{A}_{h}(u_h; \alpha_h, \alpha_{i-1,h} - \alpha_h) \le f(u_h; \alpha_{i-1,h} - \alpha_h). \]
Thus, using a Cauchy--Schwarz inequality and the fact that $\alpha_{h}\leq 1$ and $\alpha_{i-1,h} \leq 1$, one has
\begin{align*}
\frac{2G_c}{c_w} & \int_{\Omega} \ell |\nabla_h \alpha_h|^2 dx \leq  \mathcal{A}_{h}(u_h; \alpha_h, \alpha_h) \leq \mathcal{A}_{h}(u_h; \alpha_h, \alpha_{i-1,h}) + f(u_h; \alpha_{i-1,h} - \alpha_h) \\
& \leq \int_{\Omega} \mathbb{C}\e_h(u_h) \cdot \e_h(u_h) dx + \frac{2G_c\ell}{c_w}\Vert\nabla_h \alpha_h\Vert_{L^2(\Omega)}\Vert\nabla_h \alpha_{i-1,h}\Vert_{L^2(\Omega)} \\
& \leq C\Vert u_h\Vert_{ip}^2 + C'\Vert\nabla_h \alpha_h\Vert_{L^2(\Omega)} \leq C + C'\Vert\nabla_h \alpha_h\Vert_{L^2(\Omega)},
\end{align*}
where $C>0$ and $C'>0$ are generic non-negative constants.

The second order polynomial in the variable $\Vert\nabla_h \alpha_h\Vert_{L^2(\Omega)}$ is negative between its two real roots and thus $\Vert\nabla_h \alpha_h\Vert_{L^2(\Omega)}$ is bounded from above.
Using the compactness of the Crouzeix--Raviart FE~\cite[p.~297]{droniou2018gradient}, there exists $\beta_i \in A$ such that, up to a subsequence, $\alpha_h \to \beta_i$ strongly in $L^2(\Omega)$ and $\nabla_h \alpha_h \rightharpoonup \nabla \beta_i$ weakly in $\left(L^2(\Omega)\right)^d$.

\end{proof}

\begin{proposition}[Existence of solution to the discretized problem]
There exists $(u_h, \alpha_h) \in V_{i,h}$ solving~\eqref{eq:discrete eq u 2} and~\eqref{eq:discrete inequality} simultaneously.
\end{proposition}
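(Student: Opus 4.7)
The plan is to apply a fixed-point argument in the finite-dimensional space $A_h$. Define the compact convex set
\[ K := \left\{ \beta_h \in A_h \,:\, \alpha_{i-1,h} \le \beta_h \le 1 \text{ a.e. in } \Omega \right\}, \]
which is nonempty since $\alpha_{i-1,h} \in K$ (assuming inductively that $0 \le \alpha_{i-1,h} \le 1$). I would then construct a map $T : K \to K$ as follows. Given $\alpha_h \in K$, Proposition~\ref{th:discrete solution disp} produces a unique $u_h(\alpha_h) \in U_{i,h}$ satisfying~\eqref{eq:discrete eq u 2}. Feeding that $u_h(\alpha_h)$ into~\eqref{eq:discrete inequality}, Proposition~\ref{th:damage ineq} returns a unique $T(\alpha_h) \in K_{i,h}$. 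The inclusion $T(\alpha_h) \le 1$ is obtained by the classical truncation argument: replacing the minimizer $T(\alpha_h)$ by $\min(T(\alpha_h), 1)$ can only decrease the discrete energy $\mathcal{E}_{\ell,h}(u_h(\alpha_h),\cdot)$ thanks to the monotonicity of $a$ and $w$ on $[0,1]$, so $T(\alpha_h) \in K$.

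Next I would verify that $T$ is continuous on $K$. Since we work in a finite-dimensional space, all norms are equivalent, and continuity can be checked componentwise. Suppose $\alpha_h^{(n)} \to \alpha_h$ in $K$. The coefficients $a_h(\alpha_h^{(n)}) = \Pi_h a(\alpha_h^{(n)})$, the weighted averages $\{\cdot\}_{w,F}$ and the harmonic penalty factors $\gamma_F$ from~\eqref{eq:weighted pen} are continuous functions of $\alpha_h$ because $a_h(\alpha_h) + \eta_\ell \ge \eta_\ell > 0$ is uniformly bounded away from zero. Hence the discrete bilinear form $\mathcal{U}_h(\alpha_h^{(n)};\cdot,\cdot)$ converges coefficient-wise to $\mathcal{U}_h(\alpha_h;\cdot,\cdot)$. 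The coercivity estimate already used in the proof of Proposition~\ref{th:discrete solution disp} provides a uniform lower bound on the smallest eigenvalue of the associated matrix, so $u_h(\alpha_h^{(n)}) \to u_h(\alpha_h)$. Substituting into~\eqref{eq:discrete inequality}, standard continuous dependence of solutions of strictly convex quadratic programs on their data yields $T(\alpha_h^{(n)}) \to T(\alpha_h)$.

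The set $K$ is a nonempty, closed, bounded, convex subset of the finite-dimensional vector space $A_h$, hence compact. Brouwer's fixed-point theorem then produces $\alpha_h^\star \in K$ with $T(\alpha_h^\star) = \alpha_h^\star$. Setting $u_h^\star := u_h(\alpha_h^\star)$, the pair $(u_h^\star, \alpha_h^\star)$ solves~\eqref{eq:discrete eq u 2} and~\eqref{eq:discrete inequality} simultaneously, which is the claim.

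The main obstacle I expect is the verification of the continuity of $T$ in the presence of the weighted averages and of the harmonic penalty $\gamma_F$, which are rational in $\alpha_h$; the uniform strict positivity $a_h(\alpha_h) + \eta_\ell \ge \eta_\ell$ inherited from the residual stiffness $\eta_\ell > 0$ is what makes these rational expressions globally smooth on $K$ and ultimately makes the fixed-point argument go through. A secondary technical point is justifying the truncation at $\alpha_h \le 1$ in the Crouzeix--Raviart space: since $A_h$ is not lattice-stable (pointwise truncation does not preserve the zero-mean-jump condition), one should instead compare the discrete energy of the optimizer with the energy of a discrete admissible function derived from a Lagrange-type truncation, or argue directly on the first-order conditions that any optimum must satisfy $\alpha_h \le 1$ pointwise.
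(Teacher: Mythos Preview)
Your argument is correct in spirit and uses the same tool as the paper (Brouwer's fixed-point theorem), but the set-up differs in two notable ways. First, the paper works in the full product space $V_{i,h}$ and defines a \emph{decoupled} map $T:(v_h,\beta_h)\mapsto(u_h,\alpha_h)$, where $u_h$ is obtained from $\beta_h$ alone via~\eqref{eq:discrete eq u 2} and $\alpha_h$ from $v_h$ alone via~\eqref{eq:discrete inequality}; you instead compose the two solves and search for a fixed point only in $A_h$. Second, for the compact convex invariant set the paper does not use your pointwise box $\{\alpha_{i-1,h}\le\beta_h\le 1\}$ but rather the norm balls coming from the a~priori estimates of Lemma~\ref{th:compactness sym} (a bound on $\|u_h\|_{ip}$ and one on $\|\nabla_h\alpha_h\|_{L^2}$), which are automatically convex and are preserved by $T$ without any truncation step. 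Your route has the advantage of reducing the dimension of the fixed-point space and of making the continuity argument more explicit; the paper's route has the advantage of sidestepping the Crouzeix--Raviart truncation issue you flag at the end, since it never needs to force $T(\alpha_h)\le 1$ pointwise. That truncation concern is real in your framework --- $A_h$ is indeed not stable under $\min(\cdot,1)$ --- so if you keep your formulation you should replace the box $K$ by a norm-bounded set as the paper does, or give a complete argument for the upper bound directly from the optimality conditions.
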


\begin{proof}
Let $T:(v_h,\beta_h) \mapsto (u_h,\alpha_h)$, where $u_h$ is the solution of $\mathcal{U}_h(\beta_h;u_h,\bullet) = 0$ over $U_{i,h}$ and $\alpha_h$ is the solution of $\mathcal{A}_h(v_h;\alpha_h,\alpha_{i-1,h}-\bullet) \le 0$ over $K_{i,h}$.
Assuming $v_h$ and $\beta_h$ verify the bounds proved in the proof of Lemma~\ref{th:compactness sym}, then $u_h$ and $\alpha_h$ verify these same bounds. Thus $T$ is a mapping of a nonempty compact convex subset of $V_{i,h}$ into itself.
As, $\mathcal{U}_h(\beta_h)$ and $\mathcal{A}_h(v_h)$ are continuous bilinear forms, $T$ is a continuous map.
As a consequence, using Brouwer fixed point theorem~\cite[p.~179]{brezis}, there exists a fixed point $(u_h,\alpha_h)$ solving~\eqref{eq:discrete eq u 2} and~\eqref{eq:discrete inequality} simultaneously.
\end{proof}

\begin{lemma}
$(v_i,\beta_i)$ is a solution of~\eqref{eq:continuous balance disp}.
\end{lemma}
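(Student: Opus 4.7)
The plan is to test the discrete equation $\mathcal{U}_h(\alpha_h; u_h, v_h) = 0$ with $v_h$ chosen as the continuous $\mathbb{P}^1$ Lagrange interpolant of an arbitrary smooth field $v \in C^\infty(\bar\Omega)^d$ satisfying $v=0$ on $\partial\Omega_D$, and to pass to the limit $h \to 0$ along the subsequence produced by Lemma~\ref{th:compactness sym}. Because this $v_h$ is globally continuous, $[v_h]_F = 0$ on every internal facet, so both the penalty term and the consistency term containing $[v_h]_F$ in~\eqref{eq:discrete bilinear form} drop immediately, leaving only the volume term and the non-symmetric consistency term carrying $[u_h]_F$ to analyze.

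For the volume term, the strong $L^2$-convergence $\alpha_h \to \beta_i$ combined with the continuity of $a$ and a uniform $L^p$-bound (for large $p$, coming from the $H^1$-bound on $\alpha_h$) yields $a(\alpha_h) \to a(\beta_i)$ strongly in $L^2$. Since $\Pi_h$ is an $L^2$-contraction with $\Pi_h \phi \to \phi$ for every $\phi \in L^2$, one then deduces $a_h(\alpha_h) = \Pi_h a(\alpha_h) \to a(\beta_i)$ strongly in $L^2$. Coupled with the weak convergence $\e_h(u_h) \rightharpoonup \e(v_i)$ in $(L^2)^{d\times d}$ and the strong $L^2$-convergence $\e(v_h) \to \e(v)$, the standard weak--strong limit passage sends the volume term to $\int_\Omega (a(\beta_i)+\eta_\ell)\mathbb{C}\e(v_i)\cdot\e(v)\,dx$.

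The core difficulty is the surviving consistency term
\[ T_h := \sum_{F \in \mathcal{F}_h^i} \int_F \{(a_h(\alpha_h)+\eta_\ell)\sigma_h(v_h)\cdot n_F\}_{w,F} \cdot [u_h]_F\,dS. \]
I would split $T_h = A_h + R_h$, where $A_h$ replaces the facet-dependent weighted average by the globally-defined tensor field $\phi := (a(\beta_i)+\eta_\ell)\mathbb{C}\e(v)$, which lies in $H^1(\Omega)^{d\times d}$ under the (a-posteriori verifiable) bound $0\le\beta_i\le 1$, $\beta_i\in H^1$, and the smoothness of $v$. Since $\phi \in H^1$, its internal traces coincide from both sides, so broken integration by parts converts $A_h$ into a volume integral containing $\phi:\nabla_h u_h$, a volume integral containing $\mathrm{div}(\phi)\cdot u_h$, and a boundary integral on $\partial\Omega$; each of these passes to its continuous counterpart using the weak/strong convergence of the broken gradient and the strong $L^2$-convergence of $u_h$, and their sum vanishes by the continuous IBP identity for $v_i \in H^1$, yielding $A_h \to 0$. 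The error $R_h$ is estimated by Cauchy--Schwarz as $|R_h| \le \bigl(\sum_F h_F \|\phi_h - \phi\|_{L^2(F)}^2\bigr)^{1/2} |u_h|_J$; the first factor is $o(1)$ by combining the $L^2$-strong convergence of $a_h(\alpha_h)$, the cellwise $L^\infty$-convergence of $\sigma_h(v_h)$ to $\sigma(v)$ (smooth $v$), and the discrete trace inequality controlling facet $L^2$-norms by neighboring cell $L^2$-norms, while $|u_h|_J$ stays bounded by Lemma~\ref{th:compactness sym}.

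Gathering the three limit passages gives the continuous identity for every smooth $v$ vanishing on $\partial\Omega_D$, and density of such functions in $\{w \in (H^1(\Omega))^d : w = 0 \text{ on } \partial\Omega_D\}$ extends it to all admissible test functions, which is precisely~\eqref{eq:continuous balance disp}. The main obstacle is the vanishing of $T_h$: since the weighted facet average mixes a weakly-converging broken gradient with a nonlinear function of $\alpha_h$ on lower-dimensional sets, neither weak convergence nor the mere boundedness of $|u_h|_J$ alone suffices, and the comparison with the globally-defined $H^1$-field $\phi$ combined with broken integration by parts is the essential device that converts facet quantities into volume quantities where the available convergences close the argument.
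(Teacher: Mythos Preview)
Your argument is correct and follows a route that is genuinely different from, and more explicit than, the paper's. The paper tests~\eqref{eq:discrete eq u 2} with $\pi_h\varphi$, the $L^2$-projection onto the discontinuous space of a compactly supported $\varphi\in (C_c^\infty(\Omega))^d$, and then dismisses all three facet contributions in a single sentence (``vanish when $h\to 0$ because $\varphi,v_i\in (H^1(\Omega))^d$''), before treating the volume term via the $(I)+(II)+(III)$ splitting. Your choice of the \emph{continuous} Lagrange interpolant kills the penalty term and the $[v_h]_F$-consistency term outright, isolating the genuinely delicate non-symmetric term $T_h$ carrying $[u_h]_F$; for that term your broken-IBP comparison with the global $H^1$-field $\phi$ is precisely the device needed and makes explicit what the paper's one-line dismissal hides. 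Your weak--strong passage on the volume term is equivalent to the paper's $(I)+(II)+(III)$ decomposition, just phrased more compactly. Two small technical caveats: in the $R_h$ estimate you should invoke the continuous multiplicative trace inequality on each cell (the discrete inverse trace inequality does not apply to the non-polynomial $\phi$), and your broken-IBP identity for $A_h$ produces a boundary integral $\int_{\partial\Omega}(\phi n)\cdot u_h$ which is most cleanly disposed of by taking $v$ with compact support in $\Omega$, as the paper does, and then appealing to density.
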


\begin{proof}

Let $\varphi \in \left(\mathcal{C}_c^\infty(\Omega)\right)^d$ be a function with compact support in $\Omega$.
Testing~\eqref{eq:discrete eq u 2} with $\pi_h \varphi$, one has
\begin{multline*}
   \int_{\Omega} (a_h(\alpha_h) + \eta_\ell) \mathbb{C}\e_h(u_h) \cdot \e_h(\pi_h \varphi)dx \\
   - \sum_{F \in \mathcal{F}^i_h} \int_F n \cdot  \left(\{(a_h(\alpha_h) + \eta_\ell) \sigma_h(u_h) \}_F \cdot [\pi_h \varphi]_F -\{(a_h(\alpha_h) + \eta_\ell) \sigma_h(\pi_h \varphi) \}_F \cdot [u_h]_F \right)dS \\ + \sum_{F \in \mathcal{F}^i_h} \frac{\zeta \gamma_F}{h_F} \int_F [u_h]_F \cdot [\pi_h \varphi]_F dS = 0.
\end{multline*}
The last two terms in the left-hand side vanish when $h \to 0$ because $\varphi,v_i \in \left(H^1(\Omega)\right)^d$.
Regarding the first term in the left-hand side, one has
\begin{equation*}
\begin{aligned}
\int_{\Omega} (a_h(\alpha_h) + \eta_\ell) \mathbb{C}\e_h(u_h) \cdot \e_h(\pi_h \varphi)dx &= \int_{\Omega} (a(\beta_i) + \eta_\ell) \mathbb{C}\e_h(u_h) \cdot \e_h(\pi_h \varphi)dx \\
&+ \int_{\Omega} (a_h(\alpha_h) - \Pi_h a(\beta_i)) \mathbb{C}\e_h(u_h) \cdot \e_h(\pi_h \varphi)dx \\
&+ \int_{\Omega} (\Pi_h a(\beta_i) - a(\beta_i)) \mathbb{C}\e_h(u_h) \cdot \e_h(\pi_h \varphi)dx \\
& = (I) + (II) + (III)
\end{aligned}
\end{equation*}
Passing to the limit in $(I)$, one obtains the expected term
\begin{equation*}
   \int_{\Omega} (a(\beta_i) + \eta_\ell) \mathbb{C}\e(v_i) \cdot \e(\varphi)dx.
\end{equation*}
Let us now prove that $(II)$ and $(III)$ vanish as $h \to 0$.
Using a Cauchy--Schwarz inequality, one has
\[ \begin{aligned}
(II) &\leq \left(\int_{\Omega} (\mathbb{C}\e_h(u_h) \cdot \e_h(\pi_h \varphi))^2dx \right)^{1/2} \Vert \Pi_h a(\alpha_h) - \Pi_h a(\beta_i) \Vert_{L^2(\Omega)} \\
 &\leq C \Vert\varphi\Vert_{W^{1,\infty}(\Omega)} \Vert u_h\Vert_{ip} \Vert \Pi_h a(\alpha_h) - \Pi_h a(\beta_i) \Vert_{L^2(\Omega)}.
\end{aligned} \]
We focus on the second term in the right-hand side.
\[ 
\Vert\Pi_h a(\alpha_h) - \Pi_h a(\beta_i) \Vert_{L^2(\Omega)} \leq \Vert a(\alpha_h) - a(\beta_i) \Vert_{L^2(\Omega)},
\]
since $\Pi_h$ is a projection in $L^2(\Omega)$.
Using the strong convergence $\alpha_h \to \beta_i$ in $L^2(\Omega)$ and the fact that $a$ is continuous gives the desired result.
Regarding $(III)$, using a Cauchy--Schwarz inequality, one has
\[ \begin{aligned}
(III) &\leq \left(\int_{\Omega} (\mathbb{C}\e_h(u_h) \cdot \e_h(\pi_h \varphi))^2dx \right)^{1/2} \Vert a(\beta_i) - \Pi_h a(\beta_i) \Vert_{L^2(\Omega)} \\
& \le C \Vert \varphi \Vert_{W^{1,\infty}(\Omega)} \Vert u_h\Vert_{ip} \Vert a(\beta_i) - \Pi_h a(\beta_i) \Vert_{L^2(\Omega)}
\end{aligned}, \]
where $C>0$ is a generic non-negative constant.
Using a classical local approximation result (see~\cite[Proposition~1.135]{ern_guermond} for instance), one has:
\[ \Vert a(\beta_i) - \Pi_h a(\beta_i) \Vert_{L^2(\Omega)} \le Ch \Vert \nabla (a(\beta_i))\Vert_{L^2(\Omega)}, \]
where $\nabla(a(\beta_i)) = a'(\beta_i)\nabla \beta_i \in \left(L^2(\Omega)\right)^d$ because $\beta_i \in L^\infty(\Omega) \cap H^1(\Omega)$ and $a$ is $\mathcal{C}^1$ and thus $(III)$ vanishes as $h \to 0$.
\end{proof}

\begin{lemma}
$\nabla_h u_h \to \nabla v_i$ strongly in $\left(L^2(\Omega)\right)^{d \times d}$, where $u_i$ is a solution of~\eqref{eq:continuous balance disp}.
\end{lemma}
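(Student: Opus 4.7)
The plan is to derive strong convergence of the symmetric gradient $\e_h(u_h)$ from an energy identity, and then lift it to the full gradient via a broken Korn inequality. First, I would reproduce the key identity used in Lemma~\ref{th:compactness sym}: testing~\eqref{eq:discrete eq u 2} with $u_h - f_i$ and exploiting the antisymmetry of the consistency terms of~\eqref{eq:discrete bilinear form} together with $[f_i]_F=0$ for $f_i \in \left(H^1(\Omega)\right)^d$ yields
\begin{equation*}
\int_\Omega (a_h(\alpha_h)+\eta_\ell)\mathbb{C}\e_h(u_h)\cdot\e_h(u_h)\,dx + \sum_{F\in\mathcal{F}_h^i}\frac{\zeta\,\gamma_F}{h_F}\Vert[u_h]_F\Vert_{L^2(F)}^2 = \mathcal{U}_h(\alpha_h;u_h,f_i).
\end{equation*}
The right-hand side can be passed to the limit using the weak convergence of $\e_h(u_h)$, the strong $L^2$ convergence $a_h(\alpha_h)\to a(\beta_i)$ established in the previous lemma, and a Cauchy--Schwarz bound on the consistency term remaining after the jump of $f_i$ is discarded. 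Testing the continuous equation~\eqref{eq:continuous balance disp} with the admissible function $v_i - f_i$ then identifies the limit as $L := \int_\Omega (a(\beta_i)+\eta_\ell)\mathbb{C}\e(v_i)\cdot\e(v_i)\,dx$.

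Next, weak lower semicontinuity of the weighted quadratic form yields
\begin{equation*}
\liminf_h \int_\Omega (a_h(\alpha_h)+\eta_\ell)\mathbb{C}\e_h(u_h)\cdot\e_h(u_h)\,dx \ge L.
\end{equation*}
This uses that $\sqrt{a_h(\alpha_h)+\eta_\ell}\to\sqrt{a(\beta_i)+\eta_\ell}$ strongly in $L^p$ for every $p<\infty$ (by dominated convergence and uniform boundedness of the weights), so that $\sqrt{a_h(\alpha_h)+\eta_\ell}\,\e_h(u_h)$ converges weakly in $L^2$ to $\sqrt{a(\beta_i)+\eta_\ell}\,\e(v_i)$. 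Combined with the non-negativity of the penalty contribution in the energy identity, this forces simultaneously $\int_\Omega(a_h(\alpha_h)+\eta_\ell)\mathbb{C}\e_h(u_h)\cdot\e_h(u_h)\,dx \to L$ and $\sum_F \zeta\gamma_F h_F^{-1}\Vert[u_h]_F\Vert^2 \to 0$. Because of the uniform bound $\gamma_F \ge \eta_\ell^2/(1+\eta_\ell)$, the latter implies $|u_h|_J \to 0$. Expanding the non-negative quantity $\int_\Omega (a_h(\alpha_h)+\eta_\ell)\mathbb{C}(\e_h(u_h)-\e(v_i))\cdot(\e_h(u_h)-\e(v_i))\,dx$ and passing to the limit using the above convergences and the strong-weak pairings shows that it vanishes. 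The coercive lower bound $(a_h(\alpha_h)+\eta_\ell)\mathbb{C}\xi\cdot\xi \ge 2\mu\eta_\ell|\xi|^2$ then upgrades this to strong $L^2$ convergence $\e_h(u_h) \to \e(v_i)$.

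Finally, I would invoke a broken Korn inequality for piecewise $\mathbb{P}^1_d$ vector fields (\emph{e.g.}, Brenner's discrete Korn estimate). Letting $I_h v_i$ denote a DG interpolant of $v_i$ whose Dirichlet trace matches that of $u_h$ on $\partial\Omega_D$, the difference $w_h := u_h - I_h v_i$ lies in $U_{0,h}$. Standard interpolation estimates then yield $\Vert w_h\Vert_{L^2}\to 0$, $\Vert\e_h(w_h)\Vert_{L^2}\to 0$, and $|w_h|_J\to 0$ (where $|I_h v_i|_J$ is $O(h^{1/2}\Vert v_i\Vert_{H^1})$ because $v_i$ is continuous across interior facets, while $|u_h|_J\to 0$ by the previous step). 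Broken Korn then produces $\Vert\nabla_h w_h\Vert_{L^2}\to 0$, hence $\nabla_h u_h \to \nabla v_i$ strongly in $(L^2(\Omega))^{d\times d}$. The main obstacle is this final step: the rigorous application of a discrete Korn inequality to a DG field with non-homogeneous Dirichlet data must be carried out carefully, and similarly the precise identification of the limit of $\mathcal{U}_h(\alpha_h;u_h,f_i)$ in the first step requires reconciling the mismatch between the broken and continuous test spaces.
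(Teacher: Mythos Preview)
Your approach is essentially the same as the paper's: test~\eqref{eq:discrete eq u 2} with $u_h - \pi_h f_i$ to obtain an energy identity, deduce strong convergence of $\e_h(u_h)$, then apply a discrete Korn inequality. The paper's argument is extremely terse---it simply asserts that ``using the strong convergence in $u_h$ and $\alpha_h$ in the right-hand side gives $0$'' and that this forces $\e_h(u_h)\to\e(v_i)$ strongly---whereas you supply the missing mechanism: convergence of the weighted energy to $L$ via weak lower semicontinuity plus norm convergence, followed by expansion of the weighted quadratic form of the difference. This is the correct way to make the paper's one-line step rigorous, and your observation that the penalty contribution must tend to zero (hence $|u_h|_J\to 0$) is exactly what is needed for the subsequent Korn step.

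Two minor points. First, you should test with $u_h-\pi_h f_i$ rather than $u_h-f_i$, since only the former lies in $U_{0,h}$; this changes nothing in spirit but adds a harmless interpolation remainder. Second, the residual consistency term $\sum_F\int_F\{(a_h+\eta_\ell)\sigma_h(\pi_h f_i)\cdot n_F\}_{w,F}\cdot[u_h]_F$ on the right-hand side is only $O(|u_h|_J)$ a priori, so it cannot be discarded \emph{before} you know $|u_h|_J\to 0$. The clean way out is to absorb it into the penalty via Young's inequality (the weighted average is uniformly bounded and a discrete trace inequality controls $\sigma_h(\pi_h f_i)$ on facets by $\|f_i\|_{H^1}$), which still yields $\limsup$ of the weighted energy $\le L$; combined with your $\liminf\ge L$, the squeeze closes. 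You flagged precisely this point as the remaining obstacle, and it is indeed the only delicate step.
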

\begin{proof}
We consider again $f_i \in \left(H^1(\Omega)\right)^d$ such that $f_i=w_i$ on $\partial \Omega_D$. We are going to test~\eqref{eq:discrete eq u 2} with $\tilde{v}_h = u_h - \pi_h f_i$ so that $\tilde{v}_h \in U_{0h}$.
One thus has
\[ \begin{aligned}
       & \int_{\Omega} (a_h(\alpha_h) + \eta_\ell) \mathbb{C}\e_h(u_h) \cdot \nabla_h \tilde{v}_h dx \\ &= \sum_{F \in \mathcal{F}^i_h} \int_F  n \cdot  \left(\{(a_h(\alpha_h) + \eta_\ell)\sigma_h(u_h) \}_F \cdot [\tilde{v}_h]_F - \{(a_h(\alpha_h) + \eta_\ell)\sigma_h(\tilde{v}_h) \}_F \cdot [u_h]_F \right) dS \\ &- \sum_{F \in \mathcal{F}^i_h} \frac{\zeta \gamma_F}{h_F} \int_F [u_h]_F \cdot [\tilde{v}_h]_F dS,
   \end{aligned}
\]
Using the strong convergence in $u_h$ and $\alpha_h$ in the right-hand side gives $0$ but it also ensures that the product of $e_h(u_h)$ in the left-hand side has a limit when $h \to 0$.
Thus $e_h(u_h) \to e(v_i)$ strongly in $\left(L^2(\Omega)\right)^{d\times d}$, when $h \to 0$.
Using a discrete Korn inequality over $\mathbb{P}^1_d(\mathcal{T}_h)$, we get that $\nabla_h u_h \to \nabla v_i$ strongly in $\left(L^2(\Omega)\right)^{d\times d}$.
\end{proof}

\begin{lemma} 
$(v_i,\beta_i)$ is a solution of~\eqref{eq:continuous inequality}.
\end{lemma}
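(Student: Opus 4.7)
The plan is to pass to the limit $h \to 0$ in the discrete variational inequality~\eqref{eq:discrete inequality} after testing it against a discrete approximation of an arbitrary element of $K_i$. Given $\beta \in K_i$, I set $\psi := \beta - \beta_i$, so that $\psi \in H^1(\Omega)$, $\psi \ge 0$ almost everywhere in $\Omega$, and $\psi = 0$ on $\partial \Omega_D$. By density of regular non-negative functions in this cone, it suffices to derive the target inequality for $\psi \in \mathcal{C}^\infty(\overline{\Omega})$ with $\psi \ge 0$ and $\psi = 0$ on $\partial \Omega_D$; the general case will then follow by approximation.

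Next, I would take as a discrete test function $\beta_h := \alpha_{i,h} + \mathcal{I}_{CR}\psi$, where $\mathcal{I}_{CR}$ denotes the Crouzeix--Raviart interpolant. Since its degrees of freedom are the face averages of $\psi$, the hypothesis $\psi \ge 0$ forces every dof value of $\mathcal{I}_{CR}\psi$ to be non-negative, and hence $\beta_h \ge \alpha_{i,h} \ge \alpha_{i-1,h}$ in the sense used to define $K_{i,h}$. Inserting this choice into~\eqref{eq:discrete inequality} and using $\beta_h - \alpha_{i,h} = \mathcal{I}_{CR}\psi$ yields
\begin{multline*}
   \int_{\Omega} \mathbb{C}\e_h(u_h) \cdot \e_h(u_h)\, \alpha_{i,h}\, \mathcal{I}_{CR}\psi \, dx + \frac{2G_c \ell}{c_w} \int_{\Omega} \nabla_h \alpha_{i,h} \cdot \nabla_h \mathcal{I}_{CR}\psi \, dx \\
   \ge \int_{\Omega} \mathbb{C}\e_h(u_h) \cdot \e_h(u_h)\, \mathcal{I}_{CR}\psi \, dx + \frac{G_c}{c_w \ell} \int_{\Omega} \mathcal{I}_{CR}\psi \, dx.
\end{multline*}

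The limit passage then rests on: (i) the strong convergence $\e_h(u_h) \to \e(v_i)$ in $\left(L^2(\Omega)\right)^{d\times d}$ supplied by the preceding lemma; (ii) the strong convergence $\alpha_{i,h} \to \beta_i$ in $L^2(\Omega)$ and the weak convergence $\nabla_h \alpha_{i,h} \rightharpoonup \nabla \beta_i$ in $\left(L^2(\Omega)\right)^d$ from Lemma~\ref{th:compactness sym}; (iii) the uniform bound $0 \le \alpha_{i,h} \le 1$ supplied by the irreversibility and a standard truncation; and (iv) the classical CR interpolation estimates $\mathcal{I}_{CR}\psi \to \psi$ in $L^2(\Omega) \cap L^\infty(\Omega)$ and $\nabla_h \mathcal{I}_{CR}\psi \to \nabla \psi$ in $\left(L^2(\Omega)\right)^d$ for a smooth $\psi$. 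The first term on the left converges because $\mathbb{C}\e_h(u_h)\cdot\e_h(u_h) \to \mathbb{C}\e(v_i)\cdot\e(v_i)$ strongly in $L^1(\Omega)$ while $\alpha_{i,h}\,\mathcal{I}_{CR}\psi$ is uniformly bounded and converges almost everywhere along a subsequence, so that dominated convergence applies; the second term is a weak--strong $L^2$ pairing; the two right-hand side terms are handled by strong--strong pairings.

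The limiting inequality reads $\mathcal{A}(v_i; \beta_i, \psi) \ge f(v_i; \psi)$, which is exactly~\eqref{eq:continuous inequality} tested against $\beta = \beta_i + \psi$. A density argument in the admissible cone then upgrades this to every $\beta \in K_i$. The main obstacle is the cubic term involving $\e_h(u_h)\cdot\e_h(u_h)\,\alpha_{i,h}$: a mere weak $L^2$ convergence of $\e_h(u_h)$ would be insufficient, so the strong convergence of the strains established in the previous lemma is essential here. A secondary, more technical concern is the preservation of the irreversibility constraint by the CR interpolation, which crucially exploits the fact that the Crouzeix--Raviart dofs are face averages and therefore preserve non-negativity at the dof level.
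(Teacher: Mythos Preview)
Your argument is correct and in fact handles the constraint more carefully than the paper does. The paper tests~\eqref{eq:discrete inequality} with $\beta_h=\pi_h\varphi$ for $\varphi\in\mathcal{C}^\infty_c(\Omega)$ and passes to the limit via the strong convergence of $\e_h(u_h)$ (from the preceding lemma) together with the asymptotic consistency of Crouzeix--Raviart elements; this is the same convergence machinery you invoke. The difference is in the test function: your choice $\beta_h=\alpha_{i,h}+\mathcal{I}_{CR}\psi$ with $\psi\ge 0$ smooth is transparently admissible in $K_{i,h}$ because the Crouzeix--Raviart dofs are face averages, whereas the paper does not explain why $\pi_h\varphi$ should satisfy $\pi_h\varphi\ge\alpha_{i-1,h}$. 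Your decomposition also isolates $\beta_h-\alpha_{i,h}=\mathcal{I}_{CR}\psi$, so the gradient term is a clean weak--strong pairing and you avoid the $\nabla_h\alpha_{i-1,h}$ contribution that forces the paper to lean on the assumed strong $H^1$ convergence of $\alpha_{i-1,h}$.

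Two minor points to tighten. First, your dominated-convergence step for the cubic term relies on a uniform $L^\infty$ bound on $\alpha_{i,h}$; a Crouzeix--Raviart function with dof values in $[0,1]$ is not pointwise in $[0,1]$, but it \emph{is} bounded in $L^\infty$ by a constant depending only on the shape-regularity parameter, so this is fine once stated explicitly. Second, the density of smooth non-negative $\psi$ vanishing on $\partial\Omega_D$ in the cone $\{\psi\in H^1(\Omega):\psi\ge 0,\ \psi=0\text{ on }\partial\Omega_D\}$ is true but not entirely trivial (it requires a truncated mollification that preserves the sign and the trace); a short justification or reference would be appropriate.
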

\begin{proof}
We are going to test~\eqref{eq:discrete inequality} with $\beta_h = \pi_h \varphi$, where $\varphi \in \mathcal{C}^\infty_c(\Omega)$. One thus has
\begin{multline}
   \label{eq:intermediary 3}
   \int_{\Omega} \mathbb{C}\e_h(u_h) \cdot \e_h(u_h) (\alpha_h - 1) (\alpha_{i-1,h} - \pi_h \varphi) dx \\ + 2\frac{G_c}{c_w} \int_{\Omega} \ell \nabla_h \alpha_h \cdot (\nabla_h \alpha_{i-1,h} - \nabla_h \pi_h \varphi) dx \le 0.
\end{multline}
Passing to the limit $h \to 0$ in~\eqref{eq:intermediary 3}, one has
\begin{equation}
   \int_{\Omega} \e(v_i) : \mathbb{C} : \e(v_i) (\beta_i-1) (\alpha_{i-1} - \varphi) dx + 2\frac{G_c}{c_w} \int_{\Omega} \ell \nabla \beta_i \cdot (\nabla \alpha_{i-1} - \nabla \varphi) dx \le 0,
\end{equation}
using the asymptotic consistency of Crouzeix--Raviart elements~\cite[Proposition 9.5, p.286]{droniou2018gradient}.
\end{proof}

\begin{lemma} $\nabla_h \alpha_h \to \nabla \beta_i$ strongly in $\left(L^2(\Omega)\right)^d$, where $\beta_i$ is the solution of~\eqref{eq:continuous inequality}.
\end{lemma}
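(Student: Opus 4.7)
The plan is to upgrade the weak convergence $\nabla_h \alpha_h \rightharpoonup \nabla \beta_i$ established in the compactness lemma to a strong convergence in $(L^2(\Omega))^d$ by proving convergence of the $L^2$-norms. Since weak convergence combined with norm convergence in a Hilbert space implies strong convergence, and since weak lower semi-continuity already gives $\|\nabla \beta_i\|_{L^2}^2 \le \liminf_{h\to 0} \|\nabla_h \alpha_h\|_{L^2}^2$, the whole argument reduces to proving the reverse inequality $\limsup_{h\to 0}\|\nabla_h \alpha_h\|_{L^2}^2 \le \|\nabla \beta_i\|_{L^2}^2$.

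To obtain an upper bound on $\|\nabla_h \alpha_h\|_{L^2}^2$, I would test the discrete variational inequality~\eqref{eq:discrete inequality} with the admissible choice $\beta_h = \alpha_{i-1,h} \in K_{i,h}$. Expanding $\mathcal{A}_h$ and $f$ and using the identity $\nabla_h \alpha_h\cdot \nabla_h(\alpha_{i-1,h}-\alpha_h) = \nabla_h \alpha_h\cdot \nabla_h \alpha_{i-1,h}-|\nabla_h\alpha_h|^2$, this rearranges to an inequality of the form $\tfrac{2G_c\ell}{c_w}\|\nabla_h\alpha_h\|_{L^2}^2 \le R_h$, where $R_h$ contains only terms that pass to the limit using the tools already built up: the weak $\times$ strong pairing $\int \nabla_h\alpha_h\cdot\nabla_h\alpha_{i-1,h}\,dx$ (using strong $H^1$-convergence of $\alpha_{i-1,h}$ assumed at the top of the appendix), the strong $L^2$-convergence of $\e_h(u_h)\to \e(v_i)$ from the previous lemma combined with the $L^\infty$-bound and strong $L^2$-convergence of $\alpha_h,\alpha_{i-1,h}$ for the elastic product term, and the strong $L^2$-convergence of $\alpha_h-\alpha_{i-1,h}$ for the $\frac{1}{\ell}$-term. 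Taking $\limsup$ yields $\limsup \tfrac{2G_c\ell}{c_w}\|\nabla_h\alpha_h\|_{L^2}^2 \le R$, where $R$ is the natural continuous counterpart.

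The main obstacle, and the crucial trick, is to identify $R$ with $\tfrac{2G_c\ell}{c_w}\|\nabla \beta_i\|_{L^2}^2$. Directly rearranging the continuous inequality~\eqref{eq:continuous inequality} with $\beta = \alpha_{i-1}$ only gives $\tfrac{2G_c\ell}{c_w}\|\nabla \beta_i\|_{L^2}^2 \le R$, which is the wrong direction. The key observation is that the choice $\beta = 2\beta_i - \alpha_{i-1}$ is also admissible in $K_i$: the irreversibility $\beta_i \ge \alpha_{i-1}$ gives $\beta \ge \alpha_{i-1}$, and both $\beta_i$ and $\alpha_{i-1}$ vanish on $\partial\Omega_D$, so does $\beta$. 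Testing~\eqref{eq:continuous inequality} with this choice produces the opposite inequality $\tfrac{2G_c\ell}{c_w}\|\nabla \beta_i\|_{L^2}^2 \ge R$, giving equality $R = \tfrac{2G_c\ell}{c_w}\|\nabla \beta_i\|_{L^2}^2$.

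Combining the bounds yields $\limsup\|\nabla_h\alpha_h\|_{L^2}^2 \le \|\nabla \beta_i\|_{L^2}^2$, and hence $\|\nabla_h\alpha_h\|_{L^2} \to \|\nabla \beta_i\|_{L^2}$. Together with the weak convergence $\nabla_h\alpha_h \rightharpoonup \nabla \beta_i$, this delivers the desired strong convergence in $(L^2(\Omega))^d$. The only delicate verification is the justification of passing to the limit in the elastic term, which rests on the $L^\infty$-boundedness of $\alpha_h$ (already exploited in the compactness lemma) combined with a.e.\ convergence extracted from the strong $L^2$-convergence and a dominated convergence argument.
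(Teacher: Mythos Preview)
Your argument is correct and takes a genuinely different route from the paper's proof.

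The paper works directly on the difference: it writes the algebraic identity
\[
\frac{2G_c\ell}{c_w}\|\nabla\beta_i-\nabla_h\alpha_h\|_{L^2}^2
= \frac{2G_c\ell}{c_w}\int_\Omega(\nabla\beta_i-\nabla_h\alpha_h)\cdot\bigl[(\nabla\beta_i-\nabla\alpha_{i-1})+(\nabla_h\alpha_{i-1,h}-\nabla_h\alpha_h)+(\nabla\alpha_{i-1}-\nabla_h\alpha_{i-1,h})\bigr]\,dx,
\]
and handles the three pieces separately: the first by weak convergence, the second and third by injecting both the discrete and the continuous variational inequalities (evaluated on directions $\alpha_{i-1,h}-\alpha_h$ and $\alpha_{i-1}-\alpha_{i-1,h}$) so that the resulting elastic terms cancel in the limit. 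You instead use the standard Hilbert-space device ``weak convergence $+$ norm convergence $\Rightarrow$ strong convergence'' and obtain the norm bound from a single test of~\eqref{eq:discrete inequality} with $\beta_h=\alpha_{i-1,h}$; the clever step is your use of the admissible test $\beta=2\beta_i-\alpha_{i-1}$ in the continuous inequality, which together with $\beta=\alpha_{i-1}$ forces the complementarity relation $\langle\mathcal{E}'(\beta_i),\beta_i-\alpha_{i-1}\rangle=0$ and hence identifies $R$ exactly with $\tfrac{2G_c\ell}{c_w}\|\nabla\beta_i\|_{L^2}^2$. Your approach is arguably more transparent, as it exposes the underlying complementarity structure of the obstacle problem rather than relying on term-by-term cancellations; the paper's approach, on the other hand, avoids the detour through norm convergence and stays closer to a Minty-type monotonicity argument.

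Two small points. First, the cone $K_i$ in the paper is defined by $\beta\ge\alpha_i$ (here $\beta\ge\beta_i$), not $\beta\ge\alpha_{i-1}$; your check that ``$\beta\ge\alpha_{i-1}$'' should read ``$\beta\ge\beta_i$'', which is indeed equivalent to $\beta_i\ge\alpha_{i-1}$, so the admissibility argument still stands. Second, you use $\beta_i\ge\alpha_{i-1}$ a.e.\ without justification: this follows from $\alpha_h\ge\alpha_{i-1,h}$ a.e.\ together with the strong $L^2$ convergences (pass to an a.e.\ convergent subsequence), and is worth stating.
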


\begin{proof}
One has:
\begin{multline}
   \label{eq:intermediary 4}
   \frac{2G_c\ell}{c_w} \Vert \nabla \beta_i - \nabla_h \alpha_h \Vert_{L^2}^2  \le \frac{2G_c}{c_w}\int_{\Omega} \ell( \nabla \beta_i - \nabla_h \alpha_h) \cdot (\nabla \beta_i - \nabla \alpha_{i-1})dx \\
   + \frac{2G_c}{c_w}\int_{\Omega} \ell(\nabla \beta_i - \nabla_h \alpha_h) \cdot (\nabla_h \alpha_{i-1,h} - \nabla_h \alpha_h)dx \\
    + \frac{2G_c}{c_w}\int_{\Omega} \ell(\nabla \beta_i - \nabla_h \alpha_h) \cdot (\nabla \alpha_{i-1} - \nabla_h \alpha_{i-1,h})dx.
\end{multline}
The first term in the right-hand side of~\eqref{eq:intermediary 4} goes to $0$ when $h \to 0$ because $\nabla_h \alpha_h \rightharpoonup \nabla \beta_i$ weakly in $\left(L^2(\Omega)\right)^d$. Let us now bound the second and third terms in the right-hand side of~\eqref{eq:intermediary 4}, which we write $(II)$ and $(III)$.
Using $\mathcal{A}_h(v_i;\beta_i,\alpha_{i-1,h}-\alpha_h) \le f(v_i;\alpha_{i-1,h}-\alpha_h)$ and $\mathcal{A}_h(u_h;\alpha_h,\alpha_{i-1,h}-\alpha_h) \le f(u_h;\alpha_{i-1,h}-\alpha_h)$, one has
\begin{multline}
\label{eq:intermediary 5p}
(II) \le -\int_{\Omega} \mathbb{C}\e(v_i) \cdot \e(v_i)(\beta_i-1) (\alpha_{i-1,h} - \alpha_h)dx \\
+ \int_{\Omega} \mathbb{C}\e_h(u_h) \cdot \e_h(u_h) (\alpha_h-1) (\alpha_{i-1,h} - \alpha_h)dx.
\end{multline}
Using the strong convergence $\alpha_{i-1,h} \to \alpha_{i-1}$ and $\alpha_h \to \beta_i$ in $L^2(\Omega)$, the two terms in the right-hand side of~\eqref{eq:intermediary 5p} converge to $\int_{\Omega} \mathbb{C}\e(v_i)\cdot \e(v_i) \beta_i(\alpha_{i-1} - \beta_i)$ and its opposite, thus canceling each other.
Using $\mathcal{A}_h(v_i;\beta_i,\alpha_{i-1}-\alpha_{i-1,h}) \le f(v_i;\alpha_{i-1}-\alpha_{i-1,h})$ and $\mathcal{A}_h(u_h;\alpha_h,\alpha_{i-1}-\alpha_{i-1,h}) \le f(u_h;\alpha_{i-1}-\alpha_{i-1,h})$, one has
\begin{multline}
\label{eq:intermediary 6p}
(III) \le -\int_{\Omega} \mathbb{C}\e(v_i) \cdot \e(v_i) (\beta_i-1) (\alpha_{i-1} - \alpha_{i-1,h})dx \\
+ \int_{\Omega} \mathbb{C}\e_h(u_h) \cdot \e_h(u_h) (\alpha_h-1) (\alpha_{i-1} - \alpha_{i-1,h})dx.
\end{multline}
Using the strong convergence $\alpha_{i-1,h} \to \alpha_{i-1}$ in $L^2(\Omega)$ when $h \to 0$, all the terms in the right-hand side of~\eqref{eq:intermediary 6p} vanish. Thus the strong convergence of the gradients of the damage is proved.
\end{proof}

\bibliographystyle{plain}
\bibliography{bib}

\end{document}